\providecommand{\tabularnewline}{\\}
\numberwithin{equation}{section}
\numberwithin{figure}{section}
\numberwithin{table}{section}
\theoremstyle{plain}
\newtheorem{thm}{\protect\theoremname}[section]
\theoremstyle{definition}
\newtheorem{defn}[thm]{\protect\definitionname}
\theoremstyle{remark}
\newtheorem{rem}[thm]{\protect\remarkname}
\theoremstyle{plain}
\newtheorem{prop}[thm]{\protect\propositionname}
\theoremstyle{plain}
\newtheorem{cor}[thm]{\protect\corollaryname}
\theoremstyle{definition}
\newtheorem{example}[thm]{\protect\examplename}
\subjclass[2020]{Primary 14N10, 14C17, 53D10; Secondary 14H10, 14D22, 14L30}
\providecommand{\corollaryname}{Corollary}
\providecommand{\definitionname}{Definition}
\providecommand{\examplename}{Example}
\providecommand{\propositionname}{Proposition}
\providecommand{\remarkname}{Remark}
\providecommand{\theoremname}{Theorem}
\begin{document}
\global\long\def\ev{\mathrm{ev}}%
\global\long\def\d{\mathrm{d}}%
\global\long\def\MC{\mathcal{S}}%
\global\long\def\gra{\Gamma(m,d)}%
\global\long\def\grapos{\Gamma(m,d)^{+}}%
\global\long\def\Sp{\mathrm{Sp}}%

\title{Irreducible Contact Curves via Graph Stratification}
\author{Giosu{\`e} Muratore}
\date{\today}
\address{CMAFcIO, Faculdade de Ci\^{e}ncias da ULisboa, Campo Grande 1749-016 Lisboa,
Portugal}
\email{\href{mailto:muratore.g.e@gmail.com}{muratore.g.e@gmail.com}}
\urladdr{\url{https://sites.google.com/view/giosue-muratore}}
\keywords{Legendrian, contact, enumeration, symplectic, stable maps, rational curves}
\begin{abstract}
We prove that the moduli space of contact stable maps to $\mathbb{P}^{2n+1}$
of degree $d$ admits a stratification parameterized by graphs. We
use it to determine the number of irreducible rational contact curves
in $\mathbb{P}^{2n+1}$ with any Schubert condition. We give explicitely some of 
these invariants for $\mathbb{P}^{3}$ and $\mathbb{P}^{5}$. We give another proof of the formula for the number of plane contact curves in $\mathbb{P}^{3}$ meeting the appropriate number of lines.
\end{abstract}

\maketitle

\section{Introduction}

A fast-growing area of mathematics is Contact Geometry, that is the
study of manifold with a contact structure. First defined by \cite{kobayashi1959remarks},
a contact structure on a smooth complex manifold $X$ of odd dimension
is a corank $1$ non-integrable holomorphic distribution. All projective
spaces of odd dimension admit a (unique) contact structure. 

The interest for these manifolds comes from real geometry. Calabi
\cite{C2} proved that the study of harmonic maps of spheres $f\colon S^{2}\rightarrow S^{4}$
is equivalent, modulo an involution, to the study of holomorphic
horizontal curves $\tilde{f}\colon\mathbb{CP}^{1}\rightarrow\mathbb{CP}^{3}$
of fixed degree. Roughly speaking, $f$ minimizes the energy functional if
$f$ or $-f$ has a lift $\tilde{f}$, such that $\tilde{f}(\mathbb{P}^{1})$
is integral with respect to the contact structure of $\mathbb{P}^{3}$.


The enumeration of contact curves (i.e., rationally connected curves integral
with respect to the contact structure) with geometric conditions started
in \cite{levcovitz2011symplectic}. Levcovitz and Vainsencher found,
among other results, the number of contact plane curves of degree
$d$ meeting $3+d$ general lines in $\mathbb{P}^{3}$. On the other
hand, the first systematic study of enumerative invariants of those
curves started in \cite{Mur} (strengthening results from \cite{Eden}).
The strategy was to consider the closed subscheme $\MC_{m}(\mathbb{P}^{n},d)\subset\overline{\mathcal{M}}_{0,m}(\mathbb{P}^{n},d)$
of stable maps whose image is a contact curve. Certain Gromov-Witten
invariants related to $\MC_{m}(\mathbb{P}^{n},d)$ are enumerative.
Using localization, the author was able to give enumerative invariants
of contact curves with any Schubert condition.

The motivation to this paper is that very few enumerative invariants
are known for \textbf{irreducible} rational contact curves. The reason
is that $\MC_{m}(\mathbb{P}^{n},d)$ has many irreducible components.
As already pointed out in \cite[Remark 3.10]{Mur}, the subscheme
of $\MC_{m}(\mathbb{P}^{n},d)$ parameterizing reducible curves has
codimension $0$. That is, the ``boundary'' of $\MC_{m}(\mathbb{P}^{n},d)$
is not a divisor, but a component purely of the same dimension. So
that, the enumerative invariants get contribution from all irreducible
components.%

In this paper we give a decomposition of $\MC_{m}(\mathbb{P}^{n},d)$
in components parameterized by graphs, with a complete description
of the dimension of all components (Theorem \ref{thm:codim}). This
stratification is induced by a similar one of $\overline{\mathcal{M}}_{0,m}(\mathbb{P}^{n},d)$.
We use this stratification to give an explicit formula to compute
the number of reducible contact curves with any dual graph (Corollary
\ref{cor:int_prod}). Moreover, we are able to give another proof
of Levcovitz and Vainsencher's result (Proposition \ref{prop:LV}).

In Tables \ref{tab:EnuNum} and \ref{tab:EnuNuminP5-con}, we give
enumerative invariants of irreducible rational contact curves in $\mathbb{P}^{3}$
and $\mathbb{P}^{5}$ of low degree. The strategy is to subtract,
from the number of contact curves, the number of reducibles contact
curves with the same Schubert conditions. Note that the enumerative
numbers of contact curves in $\mathbb{P}^{2n+1}$ are all known thanks
to \cite{MurSch}.

An alternative way to get enumerative invariants of irreducible contact
curves may be to use another moduli space, like the Hilbert scheme.
We plan to pursue this research path in the future.

I would like to thank Carlos Florentino for his many suggestions.
This problem came out from one of the many conversations I had with
Israel Vainsencher during my work at UFMG. I would like to thank him
for his time. I also thank Giordano Cotti, Angelo Lopez and Filippo
Viviani for useful conversations. Finally, I thank Csaba Schneider
for his computational support.

The author is supported by FCT - Funda\c{c}\~{a}o para a Ci\^{e}ncia e a Tecnologia,
under the project: UIDP/04561/2020. The author is a member of GNSAGA (INdAM).

\section{Contact Structures}

All varieties in this article are defined over $\mathbb{C}$. In this
section $n$ is any non-negative integer.
\begin{defn}
Let $X$ be a $(2n+1)$-dimensional complex manifold. A contact structure
on $X$ is an open cover $\{U_{i}\}_{i}$ of $X$ together with $1$-forms
$\alpha_{i}$ on each $U_{i}$ such that:
\begin{enumerate}
\item at every point of $U_{i}$, the form $\alpha_{i}\wedge(\d\alpha_{i})^{\wedge n}$
is non-zero, and
\item if the intersection $U_{i}\cap U_{j}$ is non empty, then there exists
a non-vanishing holomorphic function $f_{ij}$ on $U_{i}\cap U_{j}$
such that $\alpha_{i}=f_{ij}\alpha_{j}$ on $U_{i}\cap U_{j}$.
\end{enumerate}
A contact curve is a rationally connected curve $C\subset X$ such that any local $1$-form
$\alpha_{i}$ vanishes at every smooth point of $C\cap U_{i}$.
\end{defn}

\begin{rem}
An equivalent definition is the following: A contact structure on $X$ is a pair $(X,L)$ where $L$ is a line
subbundle of $\Omega_X^1$ such that if $s$ is a non trivial local section of $L$, then $s\wedge  (\d s)^{\wedge n}$
is everywhere non-zero.
\end{rem}

Any symplectic $1$-form on $\mathbb{C}^{2n+2}$ defines a contact
structure on $\mathbb{P}^{2n+1}$ by the projection $\mathbb{C}^{2n+2}\backslash\{0\}\rightarrow\mathbb{P}^{2n+1}$.
Vice versa any contact structure on $\mathbb{P}^{2n+1}$ is induced by a symplectic form on $\mathbb{C}^{2n+2}$. Let $\{x_{0},\ldots,x_{n},y_{0},\ldots,y_{n}\}$
be local coordinates of $\mathbb{C}^{2n+2}$. All symplectic $1$-forms
on $\mathbb{C}^{2n+2}$ are conjugated to

\[
\alpha:=\sum_{i=0}^{n}x_{i}\d y_{i}-y_{i}\d x_{i}.
\]
In particular, there exists a unique contact structure on $\mathbb{P}^{2n+1}$
modulo conjugation. See \cite[1.4.2]{okonek1980vector}.

\begin{example}
On
$\mathbb P^3$, the distribution induced by $\alpha$ assigns to 
$p=[a:b:c:d]$ the plane with equation
$$H_p:=\{[X:Y:Z:W]\in\mathbb P^3 \slash bX-aY-dZ+cW=0\}.$$
A curve is said to be a contact
curve if the tangent line at each smooth point $p$
is contained in the plane $H_p$.
The contact curves of degree $1$ in $\mathbb P^{2n+1}$ are parameterized by the symplectic Grassmannian of isotropic $2$-spaces of $\mathbb C^{2n+2}$. See \cite{levcovitz2011symplectic} for other examples.
\end{example}
\begin{rem}
\label{rem:contact_local}We do not require that a contact
curve $C$ is irreducible. Since the property
of being contact is local, every component of $C$ is contact
in its own.
\end{rem}

\subsection{Symplectic group}

Let us denote by $\Sp(2n+2,\mathbb{C})$ the set of square matrices
$M$ of order $2n+2$ with complex coefficients such that $M^{T}\Omega M=\Omega$,
where $\Omega$ is the block matrix 
\[
\Omega=\left(\begin{array}{cc}
0 & I\\
-I & 0
\end{array}\right),
\]
and $I$ is the unit matrix of order $n+1$. The following facts are
well-known:
\begin{itemize}
\item The set $\Sp(2n+2,\mathbb{C})$ is a subgroup of the group of matrices
with determinant $1$. The two groups coincide if and only if $n=0$.
\item The standard action of $\Sp(2n+2,\mathbb{C})$ on $\mathbb{C}^{2n+2}$
is transitive, and preserves the $1$-form $\alpha$.
\end{itemize}
There exists an explicit description of $\Sp(2n+2,\mathbb{C})$. Let
$M$ be a block matrix 
\[
M=\begin{pmatrix}A & B\\
C & D
\end{pmatrix},
\]
where $A,B,C,D$ are matrices of order $n+1$. Then $M$ is in $\Sp(2n+2,\mathbb{C})$
if and only if
\begin{eqnarray}
C^{T}A & = & A^{T}C\label{eq:CA}\\
A^{T}D-C^{T}B & = & I\label{eq:AD}\\
D^{T}B & = & B^{T}D.\label{eq:DB}
\end{eqnarray}

Main references are \cite[Lecture 16]{FH} and \cite{Omeara}. It
follows that the standard action of $\Sp(2n+2,\mathbb{C})$ on $\mathbb{P}^{2n+1}$
preserves the contact structure given by $\alpha$. More generally,
for any contact structure on $\mathbb{P}^{2n+1}$ there is an action
of $\Sp(2n+2,\mathbb{C})$ preserving it. This follows from the fact
that all contact structures are equivalent by a change of coordinates.

Let $\{U_{\imath}\}_{\imath=0}^{n}$ be the standard open cover of
$\mathbb{P}^{n}$. It is well known that there exist subgroups $\{H_{\imath}\}_{\imath=0}^{n}$
of $\mathrm{GL}(n+1,\mathbb{C})$ isomorphic to $\mathbb{C}^{n}$,
such that the action $H_{\imath}\curvearrowright U_{\imath}$ is the
translation (see \cite[1.5]{MR507725}). 
Unfortunately, such subgroups do not preserve the contact form. The
following proposition may be known to experts, but I could not find any
reference.
\begin{prop}
\label{prop:action_Sp}Consider the standard action of $\Sp(2n+2,\mathbb{C})$
on $\mathbb{P}^{2n+1}$. For every $\imath=0,\ldots,2n+1$ there exist
\begin{itemize}
\item a subgroup $H_{\imath}<\Sp(2n+2,\mathbb{C})$,
\item an open subset $U_{\imath}\subset\mathbb{P}^{2n+1}$, and
\item two isomorphisms $\varphi_{\imath}\colon\mathbb{C}^{2n+1}\rightarrow H_{\imath}$
and $\phi_{\imath}\colon\mathbb{C}^{2n+1}\rightarrow U_{\imath}$,
\end{itemize}
such that
\begin{enumerate}
\item $U_{\imath}$ is $H_{\imath}$-invariant,
\item the action of $H_{\imath}$ on $U_{\imath}$ is free and transitive,
and
\item the open subsets $\{U_{\imath}\}_{\imath=0}^{2n+1}$ form an open
cover of $\mathbb{P}^{2n+1}$.
\end{enumerate}
\end{prop}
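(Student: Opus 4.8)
The plan is to construct a single subgroup explicitly and then obtain all the others by symmetry. The natural candidate for $H_{\imath}$ is a Heisenberg group, realized inside $\Sp(2n+2,\mathbb{C})$ as the opposite unipotent radical of the parabolic stabilizing an isotropic line. Concretely, I would work in coordinates $(x_{0},\dots,x_{n},y_{0},\dots,y_{n})$ and take the base point $p_{0}=[e_{x_{0}}]=[1:0:\cdots:0]$, whose stabilizer is the maximal parabolic $P<\Sp(2n+2,\mathbb{C})$ fixing the line $\langle e_{x_{0}}\rangle$ (every line is isotropic for $\Omega$). First I would write down the Lie subalgebra $\mathfrak{n}^{-}$ spanned by the $2n$ elements sending $e_{x_{0}}\mapsto e_{x_{i}}$ or $e_{x_{0}}\mapsto e_{y_{i}}$ for $i\ge 1$ (each completed to a symplectic endomorphism using the relations \eqref{eq:CA}, \eqref{eq:AD}, \eqref{eq:DB}), together with the single central element $Z$ sending $e_{x_{0}}\mapsto e_{y_{0}}$. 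One checks directly that $\mathfrak{n}^{-}$ is closed under the bracket, with $[\mathfrak{g}_{-1},\mathfrak{g}_{-1}]\subset\langle Z\rangle$, so that $H_{0}:=\exp(\mathfrak{n}^{-})$ is a $(2n+1)$-dimensional unipotent subgroup; since it is unipotent, the exponential is an isomorphism of varieties $\varphi_{0}\colon\mathbb{C}^{2n+1}\to H_{0}$.

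Next I would compute the orbit of $p_{0}$. Writing a general $N\in\mathfrak{n}^{-}$ as $N=\sum_{i}s_{i}X_{x_{i}}+\sum_{i}t_{i}X_{y_{i}}+uZ$, a short calculation shows $N^{2}e_{x_{0}}=0$ (the two quadratic contributions cancel, exactly as in the Heisenberg relation), so that $\exp(N)\,e_{x_{0}}=e_{x_{0}}+\sum_{i}s_{i}e_{x_{i}}+\sum_{i}t_{i}e_{y_{i}}+u\,e_{y_{0}}$. Hence the orbit map identifies $H_{0}$ with the affine chart $U_{0}=\{x_{0}\neq 0\}$, the action is free and transitive, and composing with $\varphi_{0}$ gives the isomorphism $\phi_{0}\colon\mathbb{C}^{2n+1}\to U_{0}$. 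This establishes properties (1) and (2) for $\imath=0$.

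To produce the remaining charts I would use the signed permutations of the $2n+2$ coordinates that preserve $\Omega$; these lie in $\Sp(2n+2,\mathbb{C})$ and realize (a lift of) its Weyl group, which acts transitively on the set $\{x_{0},\dots,x_{n},y_{0},\dots,y_{n}\}$ of coordinate directions. Choosing for each $\imath=0,\dots,2n+1$ an element $g_{\imath}$ moving $e_{x_{0}}$ to the $\imath$-th coordinate vector, I set $H_{\imath}:=g_{\imath}H_{0}g_{\imath}^{-1}$, $U_{\imath}:=g_{\imath}U_{0}$, $\varphi_{\imath}:=(\mathrm{conj}_{g_{\imath}})\circ\varphi_{0}$ and $\phi_{\imath}:=g_{\imath}\circ\phi_{0}$. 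Properties (1) and (2) are invariant under conjugation, so they transfer immediately, and $U_{\imath}$ is the complement of the $\imath$-th coordinate hyperplane. Since the $2n+2$ coordinate hyperplanes of $\mathbb{P}^{2n+1}$ have empty common intersection, the $U_{\imath}$ cover $\mathbb{P}^{2n+1}$, which is property (3).

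The main obstacle is the explicit verification in the first two steps: one must pin down the symplectic completions of the generators so that $\mathfrak{n}^{-}$ genuinely lies in the Lie algebra cut out by \eqref{eq:CA}, \eqref{eq:AD}, \eqref{eq:DB} and is a subalgebra, and then confirm that the quadratic terms in $\exp(N)\,e_{x_{0}}$ cancel, so that the orbit is precisely the linear chart rather than a translate twisted by the center. Everything else is bookkeeping with the symplectic relations and with the coordinate symmetries.
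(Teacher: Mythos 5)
Your proposal is correct, and it in fact produces exactly the same subgroups as the paper: a general element $N\in\mathfrak{n}^{-}$ satisfies $N^{2}=0$ identically (not just on $e_{x_{0}}$), so $\exp N=I+N$, and $I+N$ is precisely the paper's explicit map $\varphi_{\imath}(\vec{a},\vec{c})$ (compare the block form in Remark \ref{rem:Matrix_form}); likewise the paper's composition law, Eq.~(\ref{eq:q,w}), with its correction term $\sum_{k\neq\imath}(c_{k}b_{k}-a_{k}d_{k})$, is nothing but the Baker--Campbell--Hausdorff formula for your Heisenberg group. The routes, however, are genuinely different. The paper works by hand: it writes the linear maps explicitly, verifies the group law, the preservation of $\alpha$ and of the $x_{\imath}$-coefficient by direct computation, deduces free transitivity from the $H_{\imath}$-equivariance of $\phi_{\imath}\circ\varphi_{\imath}^{-1}$, carries out this construction for each $\imath\le n$, and obtains the remaining charts by conjugating with the involution $x_{k}\leftrightarrow y_{k}$ (which is anti-symplectic, yet conjugation by it still preserves $\Sp(2n+2,\mathbb{C})$). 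You instead get the subgroup property for free by exponentiating a nilpotent subalgebra, reduce free transitivity to the single identity $N^{2}e_{x_{0}}=0$ plus conjugacy of stabilizers, and treat all $2n+2$ charts uniformly by conjugation with symplectic Weyl-group lifts. Your version is more structural: it explains why $H_{\imath}$ is a Heisenberg group (the unipotent radical opposite to the point-stabilizer parabolic, i.e.\ the contact grading of $\mathfrak{sp}$) and avoids re-verifying anything chart by chart; the paper's version is elementary and self-contained, and its explicit composition law is what it actually uses to identify the orbit map. Two small points of precision for your write-up: the Lie algebra is cut out by the linearizations of Eqs.~(\ref{eq:CA})--(\ref{eq:DB}) (namely $B=B^{T}$, $C=C^{T}$, $D=-A^{T}$), not by those equations themselves; and your Weyl elements move $e_{x_{0}}$ to a coordinate vector only up to sign, which is harmless projectively and for identifying $U_{\imath}$ with a standard chart, but deserves a word.
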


\begin{proof}
Let $\{x_{0},\ldots,x_{n},y_{0},\ldots,y_{n}\}$ be a basis of $\mathbb{C}^{2n+2}$,
and $\imath\in\{0,\ldots,n\}$. Let $U_{\imath}$ be the open subset
of $\mathbb{P}^{2n+1}$ of vectors with coefficient of $x_{\imath}$
non-zero, and let $\phi_{\imath}\colon\mathbb{C}^{2n+1}\rightarrow U_{\imath}$
be the standard isomorphism.

Let us define the map $\varphi_{\imath}\colon\mathbb{C}^{2n+1}\rightarrow\Sp(2n+2,\mathbb{C})$
in the following way. For every $(\vec{a},\vec{c})=(a_{0},a_{1},\ldots,\widehat{a_{\imath}},\ldots,a_{n},c_{0},c_{1},\ldots,c_{n})\in\mathbb{C}^{2n+1}$,
$\varphi_{\imath}(\vec{a},\vec{c})$ is the linear map sending
\[
\begin{array}{ccl}
x_{j} & \mapsto & x_{j}+c_{j}y_{\imath}\\
x_{\imath} & \mapsto & x_{\imath}+c_{\imath}y_{\imath}+\sum_{k\neq\imath}a_{k}x_{k}+c_{k}y_{k}\\
y_{j} & \mapsto & y_{j}-a_{j}y_{\imath}\\
y_{\imath} & \mapsto & y_{\imath}
\end{array}
\]
where $j=0,\ldots,\hat{\imath},\ldots,n$. It is an easy exercise
to prove that $\varphi_{\imath}(\vec{a},\vec{c})$ defines a linear
map, whose inverse is $\varphi_{\imath}(-\vec{a},-\vec{c})$ and
$\varphi_{\imath}(\vec{a},\vec{c})\varphi_{\imath}(\vec{b},\vec{d})=\varphi_{\imath}(\vec{q},\vec{w})$
where
\begin{equation}
\begin{array}{ccl}
q_{j} & = & a_{j}+b_{j}\\
w_{j} & = & c_{j}+d_{j}\\
w_{\imath} & = & c_{\imath}+d_{\imath}+\sum_{k\neq\imath}(c_{k}b_{k}-a_{k}d_{k}).
\end{array}\label{eq:q,w}
\end{equation}
By direct computation, $\varphi_{\imath}(\vec{a},\vec{c})$ fixes
the $1$-form $\sum_{k=0}^{n}x_{k}\d y_{k}-y_{k}\d x_{k}$ (see also
Remark \ref{rem:Matrix_form}). Finally the coefficient of $x_{\imath}$
is preserved, hence $U_{\imath}$ is $\varphi_{\imath}(\vec{a},\vec{c})$-invariant.
We proved that the map $\varphi_{\imath}\colon\mathbb{C}^{2n+1}\rightarrow\Sp(2n+2,\mathbb{C})$
is an injective morphism and $H_{\imath}=\varphi_{\imath}(\mathbb{C}^{2n+1})$
is a subgroup acting on $U_{\imath}$.

Note that $\varphi_{\imath}(\vec{a},\vec{c})\cdot\phi_{\imath}(\vec{b},\vec{d})=\phi_{\imath}(\vec{q},\vec{w})$
where $\vec{q}$ and $\vec{w}$ are the vectors whose coordinates
are in Eq. (\ref{eq:q,w}). It follows that the isomorphism $\phi_{\imath}\circ\varphi_{\imath}^{-1}\colon H_{\imath}\rightarrow U_{\imath}$
is $H_{\imath}$-linear. Hence the action of $H_{\imath}$ on $U_{\imath}$
is free and transitive.

Now, we prove that such a construction works for every
$\imath$.
Suppose $n<\imath\le2n+1$ and let $f$ be the involutive
linear automorphism of $\mathbb{C}^{2n+2}$ such that $f(x_{k})=y_{k}$
for $k=0,\ldots,n$. Let $U_{\imath}$ be the open subset of $\mathbb{P}^{2n+1}$
of vectors with coefficient of $y_{\imath-n-1}$ non-zero, and let
$\phi_{\imath}\colon\mathbb{C}^{2n+1}\rightarrow U_{\imath}$ be the
standard isomorphism. Let us define $\varphi_{\imath}:=f\circ\varphi_{\imath-n-1}\circ f$.
It is clear that such a morphism satisfies the conditions, and $\{U_{\imath}\}_{\imath=0}^{2n+1}$
is the standard open cover.
\end{proof}
\begin{rem}
\label{rem:Matrix_form}We can see that $\varphi_{\imath}(\vec{a},\vec{c})$
is a symplectic linear isomorphism by looking at its matrix description.
With respect to the basis $\{x_{0},\ldots,x_{n},y_{0},\ldots,y_{n}\}$,
let us consider the representation
\[
\varphi_{\imath}(\vec{a},\vec{c})=\begin{pmatrix}A & B\\
C & D
\end{pmatrix},
\]
where $A,B,C,D$ are square matrices of order $(n+1)$. For example,
when $\imath=0$,
\[
A=\begin{pmatrix}1\\
a_{1} & 1\\
\vdots &  & \ddots\\
a_{n} &  &  & 1
\end{pmatrix}C=\begin{pmatrix}c_{0} & c_{1} & \cdots & c_{n}\\
c_{1}\\
\vdots\\
c_{n}
\end{pmatrix}D=\begin{pmatrix}1 & -a_{1} & \cdots & -a_{n}\\
 & 1\\
 &  & \ddots\\
 &  &  & 1
\end{pmatrix}
\]
and $B$ is the zero matrix. It is easy to see that they satisfy Eqs
(\ref{eq:CA}), (\ref{eq:AD}), and (\ref{eq:DB}).
\end{rem}

\section{Contact Stable Maps}

\subsection{Stable trees}

We give an introduction to stable trees as in \cite[Part I]{BM}.
A graph $\tau$ is a quadruple $(F_{\tau},V_{\tau},\partial_{\tau},j_{\tau})$,
where $F_{\tau}$ is a finite set of flags, $V_{\tau}$ is a finite
set of vertices, $\partial_{\tau}\colon F_{\tau}\rightarrow V_{\tau}$
is a map, and $j_{\tau}\colon F_{\tau}\rightarrow F_{\tau}$ is an
involution. The elements of $E_{\tau}:=\{\{f,f'\}\subseteq F_{\tau}\slash f'= j_{\tau}(f) \neq f\}$
are called the edges of $\tau$, and the elements of $L_{\tau}:=\{f\in F_{\tau}\slash j_{\tau}(f)=f\}$
are called the leaves (or tails) of $\tau$. For each vertex $v\in V_{\tau}$,
its valence $n(v)$ is the cardinality of $F_{\tau}(v):=\partial_{\tau}^{-1}(\{v\})$.

An isomorphism of graphs $\varphi\colon\tau\rightarrow\sigma$ is
a pair $(\varphi_{F},\varphi_{V})$, where $\varphi_{F}\colon F_{\tau}\rightarrow F_{\sigma}$
and $\varphi_{V}\colon V_{\tau}\rightarrow V_{\sigma}$ are bijective
maps such that $\varphi_{V}\circ\partial_{\tau}=\partial_{\sigma}\circ\varphi_{F}$
and $\varphi_{F}\circ j_{\tau}=j_{\sigma}\circ\varphi_{F}$. A graph
is connected if for every two vertices $v,v'\in V_{\tau}$, there
exists a sequence of vertices $\{v_{i}\}_{i=0}^{r}$ such that
\begin{itemize}
\item for each $i=1,\ldots r$, there exists an edge $e=\{f,f'\}$ such that $\partial_\tau(f)=v_{i-1}$, $\partial_\tau(f')=v_{i}$, and
\item $v_{0}=v$, $v_{r}=v'$.
\end{itemize}
\begin{defn}
A tree is a connected graph such that $|E_{\tau}|+|L_{\tau}|+|V_{\tau}|=1+|F_{\tau}|$.
\end{defn}

For any pair $(m,d)$ of non negative integers, a $(m,d)$-tree is
the datum $(\tau,l_{\tau},d_{\tau})$ where $\tau$ is a tree, $A_{\tau}$
is a set, $|A_{\tau}|=m$, $l_{\tau}\colon L_{\tau}\rightarrow A_{\tau}$
is a bijective map, and $d_{\tau}\colon V_{\tau}\rightarrow\mathbb{Z}$
is a map such that $\sum_{v\in V_{\tau}}d_{\tau}(v)=d$ and $d_{\tau}(v)\ge0$
for every $v\in V_{\tau}$.
\begin{defn}
A $(m,d)$-tree $(\tau,l_{\tau},d_{\tau})$ is stable if for each
$v\in V_{\tau}$, either $d_{\tau}(v)\neq0$ or $n(v)\ge3$.
\end{defn}

Two stable $(m,d)$-trees $(\tau,l_{\tau},d_{\tau})$ and $(\sigma,l_{\sigma},d_{\sigma})$
are isomorphic if there exist isomorphisms $\varphi\colon\tau\rightarrow\sigma$, $\theta\colon A_\tau\rightarrow A_\sigma$
such that $\theta\circ l_{\tau}=l_{\sigma}\circ\varphi_{F}$ and $d_{\tau}=d_{\sigma}\circ\varphi_{V}$.
By abuse of notation, we may denote a stable $(m,d)$-tree $(\tau,l_{\tau},d_{\tau})$
simply by $\tau$. 
\begin{defn}
The group of automorphisms of the stable $(m,d)$-tree $\tau$ is
denoted by $\mathrm{Aut}(\tau)$.
\end{defn}

The set of all stable $(m,d)$-tree, modulo isomorphism, is denoted
by $\gra$. It is finite by \cite[Proposition 4.3]{GP}. The subset
of those trees such that $d_{\tau}(v)>0$ for every $v\in V_{\tau}$
is denoted by $\grapos$.
\begin{defn}
The unique stable $(m,d)$-tree with only one vertex $v$ is
denoted by $\tau_{v}$.
\end{defn}

Since each edge joints two vertices, in a stable $(m,d)$-tree $\tau$
we have 
\begin{equation}
|F_{\tau}|-2|V_{\tau}|=m-2.\label{eq:relation}
\end{equation}

\subsection{Stable maps}

In this section by $n$ we will denote any odd number.

A stable map is the datum $(C,f,p_{1},\ldots,p_{m})$ where $C$ is
a projective, connected, nodal curve of arithmetic genus $0$, the
markings $p_{1},\ldots,p_{m}$ are distinct nonsingular points of
$C$, and $f\colon C\rightarrow\mathbb{P}^{n}$ is a morphism such
that $f_{*}([C])$ is a $1$-cycle of degree $d$. Moreover, for every
rational component $E\subseteq C$ mapped to a point, $E$ contains
at least three points among marked points and nodal points.

The dual graph $\tau$ of the stable map $(C,f,p_{1},\ldots,p_{m})$
is defined as follows.
\begin{itemize}
\item For each vertex $v\in V_{\tau}$, there is an irreducible component
$C_{v}$ of $C$.
\item For each nodal point of $C$, there is an edge in $\tau$ connecting
the vertices corresponding to those components.
\item For each $p_{i}$, there is a leaf of $\tau$ attached to the vertex
corresponding to the irreducible component containing $p_{i}$.
\item There is a map $d_{\tau}\colon V_{\tau}\rightarrow\mathbb{Z}$ such
that $d_{\tau}(v)$ is the degree of the restriction of $f$ to $C_{v}$.
\end{itemize}
A map $f\colon C\rightarrow\mathbb{P}^{n}$ of genus $0$ with $m$
marked points and degree $d$ is stable if and only if its dual graph
is a stable $(m,d)$-tree \cite[Proposition 4.4]{GP}.

Let $\mathcal{M}_{0,m}(\mathbb{P}^{n},d)$ be the coarse moduli space
of irreducible stable maps of genus $0$ to $\mathbb{P}^{n}$, and
let $\overline{\mathcal{M}}_{0,m}(\mathbb{P}^{n},d)$ be its compactification.
Let us denote by $\ev_{i}$ the evaluation map $\ev_{i}\colon\overline{\mathcal{M}}_{0,m}(\mathbb{P}^{n},d)\rightarrow\mathbb{P}^{n}$
at the marked point $p_{i}$. Given an isomorphism class $\tau$ of
stable $(m,d)$-trees, the locus $\mathcal{M}(\tau)\subset\overline{\mathcal{M}}_{0,m}(\mathbb{P}^{n},d)$
parameterizes maps whose dual graph is isomorphic to $\tau$. It is
locally closed and of codimension $|E_{\tau}|$. For example, the
unique stable $(m,d)$-tree with only one vertex corresponds to $\mathcal{M}_{0,m}(\mathbb{P}^{n},d)$.
There is a stratification of $\overline{\mathcal{M}}_{0,m}(\mathbb{P}^{n},d)$
given by $\mathcal{M}(\tau)$ for all $\tau\in\gra$. Finally, there
is a canonical isomorphism $\mathcal{M}(\tau)\cong\mathcal{M}_{\square}(\tau)\slash\mathrm{Aut}(\tau)$
where $\mathcal{M}_{\square}(\tau)$ is the fibred product
\begin{equation}\label{eq:first_sq}
    \xymatrix{\mathcal{M}_{\square}(\tau)\ar[d]\ar[r] & \prod_{v\in V_{\tau}}\mathcal{M}_{0,F_{\tau}(v)}(\mathbb{P}^{n},d_\tau(v))\ar[d]\\
(\mathbb{P}^{n})^{E_{\tau}\sqcup L_{\tau}}\ar[r] & (\mathbb{P}^{n})^{F_{\tau}}.
}
\end{equation}
We denote by $\overline{\mathcal{M}(\tau)}$ the topological closure
of $\mathcal{M}(\tau)$ in $\overline{\mathcal{M}}_{0,m}(\mathbb{P}^{n},d)$. 
\begin{rem}
\label{rem:closure_M}We may define a scheme $\overline{\mathcal{M}_{\square}(\tau)}$
using the same square of Eq. (\ref{eq:first_sq}), but using $\overline{\mathcal{M}}_{0,F_{\tau}(v)}(\mathbb{P}^{n},d_\tau(v))$
instead of $\mathcal{M}_{0,F_{\tau}(v)}(\mathbb{P}^{n},d_\tau(v))$. Such
a scheme has a natural ramified map to $\overline{\mathcal{M}}_{0,m}(\mathbb{P}^{n},d)$
of degree $\mathrm{Aut}(\tau)$. The image is the closure of $\mathcal{M}(\tau)$.
See \cite{BM} or \cite{MM}.
\end{rem}

Let us now focus on contact curves.
\begin{defn}
Let $(X,L)$ be a contact structure. A contact stable map
is a stable map $(C,f,p_{1},\ldots,p_{m})$ such that for any local section
$s$ of $L$, $f^*s=0$.
\end{defn}

In \cite{Mur} we proved the following properties:
\begin{itemize}
\item The moduli space of contact stable maps $\MC_{m}(\mathbb{P}^{n},d)\subset\overline{\mathcal{M}}_{0,m}(\mathbb{P}^{n},d)$
is the zero locus of a vector bundle $\mathcal{E}$ of rank $2d-1$.
\item If $d>0$, the irreducible components of $\MC_{m}(\mathbb{P}^{n},d)$ are purely
of dimension $d(n-1)+n+m-2$.
\item The Gromov-Witten invariant
\begin{equation}
\int_{\overline{\mathcal{M}}_{0,m}(\mathbb{P}^{n},d)}\ev_{1}^{*}(\Gamma_{1})\cdots\ev_{m}^{*}(\Gamma_{m})\cdot c_{2d-1}(\mathcal{E})=\int_{\MC_{m}(\mathbb{P}^{n},d)}\ev_{1}^{*}(\Gamma_{1})\cdots\ev_{m}^{*}(\Gamma_{m})\label{eq:the_GW_inv}
\end{equation}
is enumerative.
\end{itemize}
Enumerative means that if $\{\Gamma_{i}\}_{i=1}^{m}$ are subvarieties
of $\mathbb{P}^{n}$ in general position such that $\sum_{i=1}^{m}\mathrm{codim}(\Gamma_{i})=\dim\MC_m(\mathbb{P}^{n},d)$,
then (\ref{eq:the_GW_inv}) equals the number of contact curves passing
through all $\Gamma_{i}$.

If the stable map $(C,f,p_{1},\ldots,p_{m})$ contracts
$C$, then it is contact.
It follows that $\MC_{m}(\mathbb{P}^{n},0)=\overline{\mathcal{M}}_{0,m}(\mathbb{P}^{n},0)\cong\overline{\mathcal{M}}_{0,m}\times\mathbb{P}^{n}$.
Hence
\begin{equation}
\int_{\MC_{m}(\mathbb{P}^{n},0)}\ev_{1}^{*}(\Gamma_{1})\cdots\ev_{m}^{*}(\Gamma_{m})=\begin{cases}
\Gamma_{1}\cdots\Gamma_{m} & m=3\\
0 & m\neq3.
\end{cases}\label{eq:d=00003D0}
\end{equation}

\begin{rem}
Let $H$ be the generator of $H^2(\mathbb P^n,\mathbb Z)$. It can be defined a ``contact potential'' $\Phi\in\mathbb{Q}\left\llbracket h_{0},\ldots,h_{n}\right\rrbracket $,
like in the usual GW-potential:
\begin{eqnarray*}
I_{d}(m_{0},\cdots,m_{n}) & = & \int_{\MC_{m_{0}+\ldots+m_{n}}(\mathbb{P}^{n},d)}\prod_{i=0}^{n}\ev^{*}(H^{i})^{\cdot m_{i}}\\
\Phi & = & \sum_{m_{0}+\ldots+m_{n}\ge3}\sum_{d=0}^{\infty}I_{d}(m_{0},\cdots,m_{n})\frac{h_{0}^{m_{0}}}{m_{0}!}\cdots\frac{h_{n}^{m_{n}}}{m_{n}!}.
\end{eqnarray*}
For example, in the case $n=3$,
\[
\Phi=h_{0}h_{1}h_{2}+\frac{h_{1}^{3}}{3!}+\frac{h_{0}^{2}h_{3}}{2!}+2\frac{h_{1}^{3}}{3!}+h_{1}h_{2}h_{3}+2\frac{h_{2}h_{3}^{2}}{2}+\cdots.
\]
The WDVV differential equations is the system of PDE:
\[
\sum_{e=0}^{n}\Phi_{ije}\Phi_{kl(n-e)}=\sum_{e=0}^{n}\Phi_{jke}\Phi_{il(n-e)},
\]
where $\Phi_{ijk}:=\frac{\partial^{3}}{\partial h_{i}\partial h_{j}\partial h_{k}}\Phi$.
The usual GW-potential satisfies the WDVV equations \cite{KM}. The
contact potential does not. In order to see this, it is enough to compute all derivatives $\Phi_{ijk}$, and evaluate them at $0$. There are not known differential equations
satisfied by $\Phi$.
\end{rem}

\section{Stratification of the Moduli of Contact Stable Maps}

\begin{defn}
For every $\tau\in\gra$, we denote by $\MC(\tau)$ the space given
by the intersection $\MC_{m}(\mathbb{P}^{n},d)\cap\mathcal{M}(\tau)$.
\end{defn}

We will denote by $\overline{\MC(\tau)}$ the topological closure
of $\MC(\tau)$ in $\MC_{m}(\mathbb{P}^{n},d)$. The restriction of
the evaluation maps $\ev_{i}\colon\overline{\mathcal{M}}_{0,m}(\mathbb{P}^{n},d)\rightarrow\mathbb{P}^{n}$
to $\MC(\tau)$ or $\overline{\MC(\tau)}$ will still be denoted by
$\ev_{i}$. We denote by $\mathcal{S}_{m}(\mathbb{P}^{n},d)^{\circ}$ the open subset $\MC_{m}(\mathbb{P}^{n},d)\cap\mathcal{M}_{m}(\mathbb{P}^{n},d)$.
The following result follows from Proposition \ref{prop:action_Sp}.
See also \cite[Proposition 3.8]{Bagnarol}.
\begin{prop}
\label{prop:ZLT}Let $\tau\in\gra$. The map $\ev_{i}\colon\MC(\tau)\rightarrow\mathbb{P}^{n}$
is Zariski locally trivial.
\end{prop}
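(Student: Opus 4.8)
The plan is to exploit the $\Sp(n+1,\mathbb{C})$-action furnished by Proposition \ref{prop:action_Sp}. Writing $n=2k+1$ (so that $2k+1$ plays the role of $2n+1$ in Proposition \ref{prop:action_Sp}), we obtain for each $\imath=0,\dots,n$ a subgroup $H_{\imath}<\Sp(n+1,\mathbb{C})$ acting freely and transitively on an open set $U_{\imath}\subset\mathbb{P}^{n}$, with $\{U_{\imath}\}_{\imath=0}^{n}$ an open cover. First I would observe that $\Sp(n+1,\mathbb{C})$ acts on $\mathbb{P}^{n}$ by automorphisms preserving the contact form $\alpha$; consequently it acts on $\overline{\mathcal{M}}_{0,m}(\mathbb{P}^{n},d)$ by $g\cdot(C,f,p_{1},\dots,p_{m})=(C,g\circ f,p_{1},\dots,p_{m})$, and this action is algebraic. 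Because $g$ preserves $\alpha$, it sends contact curves to contact curves (if $f^{*}s=0$ then $(g\circ f)^{*}s=f^{*}(g^{*}s)=0$), so $\MC_{m}(\mathbb{P}^{n},d)$ is invariant; because $g$ is an isomorphism of $\mathbb{P}^{n}$ it leaves the combinatorial type—degrees and incidences—unchanged, so each stratum $\mathcal{M}(\tau)$, and hence $\MC(\tau)$, is invariant. Thus $H_{\imath}$ acts on $\MC(\tau)$.

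Second, I would record the equivariance of the evaluation map: since $\ev_{i}(C,f,p_{1},\dots,p_{m})=f(p_{i})$, we have $\ev_{i}(g\cdot\xi)=g\cdot\ev_{i}(\xi)$ for all $g\in\Sp(n+1,\mathbb{C})$ and all $\xi\in\MC(\tau)$. In particular $\ev_{i}\colon\MC(\tau)\to\mathbb{P}^{n}$ is $H_{\imath}$-equivariant. Since the $\{U_{\imath}\}$ cover $\mathbb{P}^{n}$, it then suffices to trivialize $\ev_{i}$ over each $U_{\imath}$. Fix the base point $x_{0}:=\phi_{\imath}(0)\in U_{\imath}$. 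Because $H_{\imath}$ acts freely and transitively on $U_{\imath}$, the composite $s:=\varphi_{\imath}\circ\phi_{\imath}^{-1}\colon U_{\imath}\to H_{\imath}$ is an isomorphism of varieties assigning to each $x\in U_{\imath}$ the unique $s(x)\in H_{\imath}$ with $s(x)\cdot x_{0}=x$. I would then define
\[
\Psi\colon U_{\imath}\times\ev_{i}^{-1}(x_{0})\longrightarrow\ev_{i}^{-1}(U_{\imath}),\qquad (x,\xi)\mapsto s(x)\cdot\xi,
\]
with candidate inverse $\eta\mapsto\bigl(\ev_{i}(\eta),\,s(\ev_{i}(\eta))^{-1}\cdot\eta\bigr)$. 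Equivariance of $\ev_{i}$ gives $\ev_{i}(\Psi(x,\xi))=s(x)\cdot x_{0}=x$, so $\Psi$ is compatible with the projection to $U_{\imath}$; a direct check using the group law shows the two maps are mutually inverse. Both are morphisms because $s$ is a morphism and the action is algebraic, so $\Psi$ is an isomorphism over $U_{\imath}$ and $\ev_{i}$ is Zariski locally trivial.

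The main obstacle I anticipate is the verification that the $\Sp(n+1,\mathbb{C})$-action on the moduli space is genuinely a morphism of schemes—so that $\Psi$ and its inverse are morphisms and not merely bijections—together with the bookkeeping that the action preserves the scheme structure of $\MC(\tau)$, including its possibly non-reduced or reducible structure. This is where one must invoke the functorial description of $\overline{\mathcal{M}}_{0,m}(\mathbb{P}^{n},d)$, namely the action of an automorphism of the target on families of stable maps, together with the fact recalled above that $\MC_{m}(\mathbb{P}^{n},d)$ is cut out as the zero locus of the bundle $\mathcal{E}$, whose formation is natural under the contact-preserving action. Once algebraicity is in hand, the geometric heart of the argument—the free transitivity of $H_{\imath}$ on $U_{\imath}$—is precisely Proposition \ref{prop:action_Sp}, and the remainder is formal.
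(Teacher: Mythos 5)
Your proposal is correct and follows essentially the same route as the paper's proof: both use the subgroups $H_{\imath}<\Sp(n+1,\mathbb{C})$ from Proposition \ref{prop:action_Sp} acting on the moduli space by post-composition, the $H_{\imath}$-equivariance of $\ev_{i}$, and the free transitive action on $U_{\imath}$ to build the trivialization $\ev_{i}^{-1}(U_{\imath})\cong U_{\imath}\times\ev_{i}^{-1}(x_{0})$. The only cosmetic difference is that the paper first trivializes $\ev_{i}$ on all of $\MC_{m}(\mathbb{P}^{n},d)$ and then restricts to $\MC(\tau)$ using that the action preserves dual graphs, whereas you establish invariance of $\MC(\tau)$ up front; the substance is identical.
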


\begin{proof}
We start by proving that $\ev_{i}\colon\MC_{m}(\mathbb{P}^{n},d)\rightarrow\mathbb{P}^{n}$
is Zariski locally trivial. As we saw in Proposition \ref{prop:action_Sp},
there exists an open cover of $\mathbb{P}^{n}$ such that each open
set $U$ is isomorphic to a subgroup $H$ of $\Sp(n+1,\mathbb{C})$,
in such a way that $U$ is $H$-invariant and the action of $H$ on
$U$ is free and transitive. This induces an action
$\beta\colon H\times\ev_{i}^{-1}(U)\rightarrow\ev_{i}^{-1}(U)$ sending $(h,(C,f,p_1,\ldots,p_m))$ to $(C,h\cdot f,p_1,\ldots,p_m)$.
Let $o\in U$ be a point. There exists a map $\gamma\colon H\times\ev_{i}^{-1}(\{o\})\rightarrow\ev_{i}^{-1}(U)$
given by 
\[
H\times\ev_{i}^{-1}(\{o\})\rightarrow H\times\ev_{i}^{-1}(U)\rightarrow\ev_{i}^{-1}(U),
\]
where the first map is the embedding and the second is $\beta$.
Let us prove that $\gamma$ is invertible. Since the action of $H$ on
$U$ is free and transitive, there exists a unique isomorphism of
varieties $g\colon U\rightarrow H$ such that $g(x)\cdot x=o$ for
all $x\in U$. Let us consider the map $((g\circ\ev_{i}),\mathrm{id})\colon\ev_{i}^{-1}(U)\rightarrow H\times\ev_{i}^{-1}(U)$.
The composition of that map with $\beta$ has image contained in
$H\times\ev_{i}^{-1}(\{o\})$ and it is the inverse of $f$.

Since $H$ preserves the dual graph of any contact
stable map, it induces an action on any $\MC(\tau)$, in such a way
that $\ev_{i}\colon\MC(\tau)\rightarrow\mathbb{P}^{n}$ is Zariski
locally trivial by restriction.
\end{proof}
\begin{prop}
\label{prop:S(t)}Let $\tau\in\gra$. Then $\mathcal{S}(\tau)\cong\mathcal{S}_{\square}(\tau)\slash\mathrm{Aut}(\tau)$
where $\mathcal{S}_{\square}(\tau)$ is the fibred product
\begin{equation}
\xymatrix{\mathcal{S}_{\square}(\tau)\ar[d]\ar[r] & \prod_{v\in V_{\tau}}\mathcal{S}_{F_{\tau}(v)}(\mathbb{P}^{n},d_\tau(v))^{\circ}\ar[d]\\
(\mathbb{P}^{n})^{E_{\tau}\sqcup L_{\tau}}\ar[r] & (\mathbb{P}^{n})^{F_{\tau}}.
}
\label{eq:S_sq}
\end{equation}
Moreover, $\mathcal{S}(\tau)$ and $\mathcal{S}_{\square}(\tau)$
are purely of the same dimension.
\end{prop}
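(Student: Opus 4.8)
The plan is to realize $\mathcal{S}(\tau)$ as the contact locus inside the already-understood stratum $\mathcal{M}(\tau)\cong\mathcal{M}_{\square}(\tau)/\mathrm{Aut}(\tau)$, and then to read off the fibred product (\ref{eq:S_sq}) by imposing the contact condition on each factor separately. The essential input for this separation is Remark \ref{rem:contact_local}: being contact is a local property, so a stable map is contact if and only if each of its irreducible components is. A point of $\mathcal{M}_{\square}(\tau)$ is a tuple $(f_{v})_{v\in V_{\tau}}$ of stable maps with irreducible domain, one per vertex, glued according to the edges and leaves of $\tau$. The induced stable map is contact precisely when every $f_{v}$ is contact, i.e. when $f_{v}\in\mathcal{S}_{F_{\tau}(v)}(\mathbb{P}^{n},d_{\tau}(v))^{\circ}$ for all $v$. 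Hence the preimage of $\mathcal{S}(\tau)$ under $\mathcal{M}_{\square}(\tau)\to\mathcal{M}(\tau)$ is exactly the fibred product of the $\mathcal{S}_{F_{\tau}(v)}(\mathbb{P}^{n},d_{\tau}(v))^{\circ}$ displayed in (\ref{eq:S_sq}), namely $\mathcal{S}_{\square}(\tau)$. Since $\mathrm{Aut}(\tau)$ permutes components while preserving the intrinsic contact property, its action on $\mathcal{M}_{\square}(\tau)$ restricts to $\mathcal{S}_{\square}(\tau)$, and passing to the quotient yields $\mathcal{S}(\tau)\cong\mathcal{S}_{\square}(\tau)/\mathrm{Aut}(\tau)$.

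For the dimension statement, the equality $\dim\mathcal{S}(\tau)=\dim\mathcal{S}_{\square}(\tau)$ and the transfer of purity are immediate, because $\mathrm{Aut}(\tau)$ is finite and so the quotient map $\mathcal{S}_{\square}(\tau)\to\mathcal{S}(\tau)$ is finite and surjective. It therefore suffices to prove that $\mathcal{S}_{\square}(\tau)$ is pure-dimensional. I would do this by induction on $|V_{\tau}|$, assembling the fibred product one vertex at a time. Because $\tau$ is a tree, I can order the vertices so that each new vertex $v$ is joined to the previously built subtree by a single edge $e=\{f,f'\}$, with $f\in F_{\tau}(v)$; adding $v$ amounts to forming the fibred product of the partial space with $\mathcal{S}_{F_{\tau}(v)}(\mathbb{P}^{n},d_{\tau}(v))^{\circ}$ over $\mathbb{P}^{n}$, matching $\ev_{f'}$ on the partial space against $\ev_{f}$ on the new factor (the remaining flags of $v$ are leaves or edges to later vertices, hence free at this step). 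The key point is that, by Proposition \ref{prop:ZLT} applied to the one-vertex tree $\tau_{v}$, the evaluation $\ev_{f}\colon\mathcal{S}_{F_{\tau}(v)}(\mathbb{P}^{n},d_{\tau}(v))^{\circ}\to\mathbb{P}^{n}$ is Zariski locally trivial, hence flat and surjective with equidimensional fibres. The projection from the new fibred product to the partial space is then a flat base change, so purity is preserved and the dimension grows by exactly $\dim\mathcal{S}_{F_{\tau}(v)}(\mathbb{P}^{n},d_{\tau}(v))^{\circ}-n$. Each factor is itself pure-dimensional, of dimension $d_{\tau}(v)(n-1)+n+n(v)-2$ when $d_{\tau}(v)>0$ and equal to $\mathcal{M}_{0,n(v)}\times\mathbb{P}^{n}$ when $d_{\tau}(v)=0$, so the induction both starts and propagates.

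Carrying out the bookkeeping, the leaves impose no condition (the bottom map of (\ref{eq:S_sq}) is the identity on the $(\mathbb{P}^{n})^{L_{\tau}}$ factors), while each of the $|E_{\tau}|$ edges cuts the dimension by $n$, giving
\[
\dim\mathcal{S}_{\square}(\tau)=\sum_{v\in V_{\tau}}\dim\mathcal{S}_{F_{\tau}(v)}(\mathbb{P}^{n},d_{\tau}(v))^{\circ}-n\,|E_{\tau}|.
\]
For $\tau\in\grapos$ one may simplify this using $|E_{\tau}|=|V_{\tau}|-1$ together with Eq. (\ref{eq:relation}) to recover $d(n-1)+n+m-2$, confirming the expectation from the introduction that these strata have the same dimension as $\MC_{m}(\mathbb{P}^{n},d)$ itself.

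The main obstacle is precisely the purity of the fibred product: a priori the gluing conditions at the edges could fail to be transverse and produce components of excess dimension. This is exactly what the homogeneity of $\mathbb{P}^{2n+1}$ under $\Sp(2n+2,\mathbb{C})$ excludes, through the Zariski local triviality of the evaluation maps established in Proposition \ref{prop:ZLT}; once flatness of a single evaluation is available, the inductive fibred-product count above is routine.
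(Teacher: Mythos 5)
Your proof is correct and follows essentially the same route as the paper: the isomorphism $\mathcal{S}(\tau)\cong\mathcal{S}_{\square}(\tau)\slash\mathrm{Aut}(\tau)$ is obtained exactly as in the paper by intersecting with $\mathcal{M}_{\square}(\tau)$ and invoking the locality of the contact condition (Remark \ref{rem:contact_local}), and the purity statement rests on the same key input, the Zariski local triviality of evaluation maps from Proposition \ref{prop:ZLT}. Your vertex-by-vertex induction along the tree is just a more explicit bookkeeping of what the paper compresses into the claim that $\mathcal{S}_{\square}(\tau)$ is locally a product $F\times U$ over $(\mathbb{P}^{n})^{E_{\tau}\sqcup L_{\tau}}$, and it yields the same dimension count.
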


\begin{proof}
By standard properties of fibred product (e.g., \cite[Proposition 4.16]{GW}),
there is a double fibred product diagram involving $\mathcal{S}_{\square}(\tau)$

\[
\xymatrix{\mathcal{S}_{\square}(\tau)\ar@{^{(}->}[r]\ar[d] & \mathcal{M}_{\square}(\tau)\ar[r]\ar[d] & (\mathbb{P}^{n})^{E_{\tau}\sqcup L_{\tau}}\ar[d]\\
\prod_{v\in V_{\tau}}\mathcal{S}_{F_{\tau}(v)}(\mathbb{P}^{n},d_\tau(v))^{\circ}\ar@{^{(}->}[r] & \prod_{v\in V_{\tau}}\mathcal{M}_{0,F_{\tau}(v)}(\mathbb{P}^{n},d_\tau(v))\ar[r] & (\mathbb{P}^{n})^{F_{\tau}}.
}
\]
That is, $\mathcal{S}_{\square}(\tau)$ is embedded in $\mathcal{M}_{\square}(\tau)$
and it is $\mathrm{Aut}(\tau)$-invariant. This induces an action
of $\mathrm{Aut}(\tau)$ on $\mathcal{S}_{\square}(\tau)$, in particular
$\mathcal{S}_{\square}(\tau)\slash\mathrm{Aut}(\tau)\subseteq\mathcal{M}(\tau)$.
As we said in Remark \ref{rem:contact_local}, a reducible curve is
contact if and only if each one of its components is contact in its
own. That is, if $(C,f,p_{1},\ldots,p_{m})\in\mathcal{M}(\tau)$ is
contact then for every irreducible component $C_{i}\subseteq C$,
the restriction $f_{|C_{i}}\colon C_{i}\rightarrow\mathbb{P}^{n}$
is contact too. Thus both $\mathcal{S}_{\square}(\tau)\slash\mathrm{Aut}(\tau)\subseteq\mathcal{S}(\tau)$
and $\mathcal{S}(\tau)\subseteq\mathcal{S}_{\square}(\tau)\slash\mathrm{Aut}(\tau)$
are true.

The fact that $\mathcal{S}_{\square}(\tau)$ is purely dimensional
follows from Proposition \ref{prop:ZLT}. Indeed, each $\mathcal{S}_{F_{\tau}(v)}(\mathbb{P}^{n},d_\tau(v))^{\circ}$
is purely dimensional, so each fiber $F$ of the locally trivial map
$\mathcal{S}_{F_{\tau}(v)}(\mathbb{P}^{n},d_\tau(v))^{\circ}\rightarrow\mathbb{P}^{n}$
is also. It follows that $\mathcal{S}_{\square}$ is locally isomorphic
to $F\times U$ where $U$ is an open subset of $(\mathbb{P}^{n})^{E_{\tau}\sqcup L_{\tau}}$.
Since $\mathrm{Aut}(\tau)$ is finite, $\mathcal{S}(\tau)$ is also
purely dimensional and $\dim\MC(\tau)=\dim\mathcal{S}_{\square}(\tau)$.
\end{proof}
Even if we use the fact that $\mathcal{S}_{m}(\mathbb{P}^{n},d)^{\circ}$
is purely dimensional, we do not say that $\mathcal{S}_{F_{\tau}(v)}(\mathbb{P}^{n},d_\tau(v))^{\circ}$
is irreducible. As already noted in the proof of \cite[Lemma 3.7]{Mur},
$\mathcal{S}_{m}(\mathbb{P}^{n},d)^{\circ}$ is a quotient $\mathcal{M}_{d}\slash\mathrm{Aut}(\mathbb{P}^{1})$
where $\mathcal{M}_{d}$ is the moduli space of harmonic maps $\mathbb{C}\mathbb{P}^{1}\rightarrow\mathbb{C}\mathbb{P}^{3}$
we cited in Introduction. The manifold $\mathcal{M}_{d}$ need to
be irreducible \cite{Loo,KL}. In the same Lemma, it is claimed that
$$\dim(\MC_1(\mathbb P^n,d_1)\times_{\mathbb P^n} \MC_1(\mathbb P^n,d_2))=\dim \MC_1(\mathbb P^n,d_1)+\dim \MC_1(\mathbb P^n,d_2)-n.$$
This follows from the fact the action of $\Sp(n+1,\mathbb{C})$ on $\mathbb P^n$ is transitive and preserves the fibers of $\ev_1\colon\MC_1(\mathbb P^n,d)\rightarrow\mathbb P^n$. So $\ev_1$ is flat.

We are now ready to state the first
important result of the paper.
\begin{thm}
\label{thm:codim}Let $n$ be an odd number and $\MC_{m}(\mathbb{P}^{n},d)$
be the moduli space of contact stable maps. 
\begin{enumerate}
\item Every subspace $\MC(\tau)$ is locally closed in $\MC_{m}(\mathbb{P}^{n},d)$.
\item There exists a stratification of $\MC_{m}(\mathbb{P}^{n},d)$ given
by the union of all $\MC(\tau)$.
\item For every $\tau\in\gra$, the spaces $\MC(\tau)$ are of pure codimension
equal to $|V_{\tau}^{0}|$, where $V_{\tau}^{0}=\{v\in V_{\tau}\slash d_\tau(v)=0\}$.
\end{enumerate}
In particular there exists a decomposition of $\MC_{m}(\mathbb{P}^{n},d)$
as union of components $\overline{\MC(\tau)}$ for $\tau\in\grapos$,
so that
\begin{equation}
\int_{\MC_{m}(\mathbb{P}^{n},d)}\ev_{1}^{*}(\Gamma_{1})\cdots\ev_{m}^{*}(\Gamma_{m})=\sum_{\tau\in\grapos}\int_{\overline{\MC(\tau)}}\ev_{1}^{*}(\Gamma_{1})\cdots\ev_{m}^{*}(\Gamma_{m}).\label{eq:Sum_plus}
\end{equation}
\end{thm}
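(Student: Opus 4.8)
\section*{Proof proposal}

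The plan is to reduce everything to the two structural facts already available: the stratification of $\overline{\mathcal{M}}_{0,m}(\mathbb{P}^{n},d)$ by the loci $\mathcal{M}(\tau)$, and the fibre-product description of $\MC(\tau)$ in Proposition \ref{prop:S(t)}. For part (1), I would note that $\MC_{m}(\mathbb{P}^{n},d)$ is closed in $\overline{\mathcal{M}}_{0,m}(\mathbb{P}^{n},d)$ (it is the zero locus of the bundle $\mathcal{E}$), whereas $\mathcal{M}(\tau)$ is locally closed there; hence $\MC(\tau)=\MC_{m}(\mathbb{P}^{n},d)\cap\mathcal{M}(\tau)$ is the intersection of a closed and a locally closed set, so it is locally closed, and a fortiori locally closed in $\MC_{m}(\mathbb{P}^{n},d)$. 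For part (2), since $\{\mathcal{M}(\tau)\}_{\tau\in\gra}$ partitions the ambient space, intersecting with $\MC_{m}(\mathbb{P}^{n},d)$ partitions it into the locally closed pieces $\MC(\tau)$; the frontier condition is inherited because $\overline{\MC(\tau)}\subseteq\overline{\mathcal{M}(\tau)}\cap\MC_{m}(\mathbb{P}^{n},d)$ and $\overline{\mathcal{M}(\tau)}$ is already a union of strata $\mathcal{M}(\sigma)$.

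The heart of the matter is part (3). By Proposition \ref{prop:S(t)} one has $\dim\MC(\tau)=\dim\mathcal{S}_{\square}(\tau)$, so it suffices to compute the dimension of the fibre product (\ref{eq:S_sq}). I would do this edge by edge. Root the tree and order its vertices $v_{0},\dots,v_{k}$, with $k=|V_{\tau}|-1=|E_{\tau}|$, so that each $v_{i}$ with $i\ge1$ is joined to exactly one earlier vertex through a single ``parent'' edge; this is possible because $\tau$ is a tree. Build $\mathcal{S}_{\square}(\tau)$ by attaching one component at a time, each attachment being a fibre product over $\mathbb{P}^{n}$ along the evaluations at the two flags of the parent edge. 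By Proposition \ref{prop:ZLT} the single evaluation $\ev_{f}\colon\mathcal{S}_{F_{\tau}(v_{i})}(\mathbb{P}^{n},d_{\tau}(v_{i}))^{\circ}\to\mathbb{P}^{n}$ is Zariski locally trivial, hence flat, and flatness is stable under base change; therefore each attachment drops the dimension by exactly $n$ relative to the product. Iterating gives
\[
\dim\mathcal{S}_{\square}(\tau)=\sum_{v\in V_{\tau}}\dim\mathcal{S}_{F_{\tau}(v)}(\mathbb{P}^{n},d_{\tau}(v))^{\circ}-n\,|E_{\tau}|.
\]

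To finish I would insert the per-vertex dimensions. When $d_{\tau}(v)>0$ the factor has dimension $d_{\tau}(v)(n-1)+n+n(v)-2$, whereas when $d_{\tau}(v)=0$ the factor is $\mathcal{S}_{F_{\tau}(v)}(\mathbb{P}^{n},0)^{\circ}\cong\mathcal{M}_{0,n(v)}\times\mathbb{P}^{n}$ (here $n(v)\ge3$), of dimension $n+n(v)-3$, which is exactly one less than the value the first formula would assign. Hence the vertex sum equals $\sum_{v}\bigl(d_{\tau}(v)(n-1)+n+n(v)-2\bigr)-|V_{\tau}^{0}|$. Using $\sum_{v}d_{\tau}(v)=d$, $\sum_{v}n(v)=|F_{\tau}|$, the relation $|F_{\tau}|-2|V_{\tau}|=m-2$ of (\ref{eq:relation}), and $|E_{\tau}|=|V_{\tau}|-1$, everything collapses to $d(n-1)+n+m-2-|V_{\tau}^{0}|$. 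Since the ambient space has dimension $d(n-1)+n+m-2$, the codimension of $\MC(\tau)$ is $|V_{\tau}^{0}|$, and purity is provided by Proposition \ref{prop:S(t)}. The integral formula is then immediate: for $d>0$ the codimension-$0$ strata are precisely those with $V_{\tau}^{0}=\emptyset$, i.e.\ $\tau\in\grapos$; every irreducible component of the pure-dimensional space $\MC_{m}(\mathbb{P}^{n},d)$ has its generic point in one such stratum, the closures $\overline{\MC(\tau)}$ for $\tau\in\grapos$ meet pairwise in lower dimension and together exhaust the top-dimensional cycle, so $[\MC_{m}(\mathbb{P}^{n},d)]=\sum_{\tau\in\grapos}[\overline{\MC(\tau)}]$ and integrating the top-degree class $\ev_{1}^{*}(\Gamma_{1})\cdots\ev_{m}^{*}(\Gamma_{m})$ splits as (\ref{eq:Sum_plus}).

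I expect the main obstacle to be the dimension count of the fibre product, specifically ensuring that the edge-matching imposes no excess dimension, so that the expected value $\sum_{v}\dim(\cdot)-n|E_{\tau}|$ is actually attained. This is exactly where flatness of the individual evaluation maps, rooted in the transitive symplectic action of Proposition \ref{prop:action_Sp} via Proposition \ref{prop:ZLT}, is indispensable. The remaining bookkeeping is routine; the only genuinely delicate point is the $-1$ shift in the dimension of the contracted ($d_{\tau}(v)=0$) vertices, which is precisely the mechanism producing the term $|V_{\tau}^{0}|$.
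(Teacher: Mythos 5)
Your proof is correct and takes essentially the same approach as the paper: parts (1)--(2) are inherited from the ambient stratification, and part (3) is exactly the paper's dimension count for the fibre product of Proposition \ref{prop:S(t)}, resting on local triviality of the evaluation maps (Proposition \ref{prop:ZLT}), the per-vertex dimensions, and the tree relations, followed by the decomposition into top-dimensional closures and linearity of the intersection product. Your edge-by-edge induction is merely an expanded derivation of the formula $\dim\mathcal{S}_{\square}(\tau)=\sum_{v}\dim\mathcal{S}_{F_{\tau}(v)}(\mathbb{P}^{n},d_{\tau}(v))^{\circ}-n|E_{\tau}|$ that the paper reads off directly from Proposition \ref{prop:S(t)}.
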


\begin{proof}
The first two points follow from analogous properties of $\overline{\mathcal{M}}_{0,m}(\mathbb{P}^{n},d)$,
so let us prove Point $(3)$. By Proposition \ref{prop:S(t)} we have
\[
\dim\mathcal{S}(\tau)=n(|E_{\tau}|+|L_{\tau}|-|F_{\tau}|)+\sum_{v\in V_{\tau}}\dim\mathcal{S}_{F_{\tau}(v)}(\mathbb{P}^{n},d_\tau(v))^{\circ}.
\]
Recall that if $d>0$ (resp., $d=0$) then $\dim\mathcal{S}_{m}(\mathbb{P}^{n},d)^{\circ}=d(n-1)+n+m-2$
(resp., $n+m-3$). Thus the dimension of $\mathcal{S}(\tau)$ is, using Eq. (\ref{eq:relation}),

\begin{eqnarray*}
&   & n(1-|V_{\tau}|)+\sum_{v\in V_{\tau}^{0}}\mathcal{S}_{F_{\tau}(v)}(\mathbb{P}^{n},d_\tau(v))^{\circ}+\sum_{v\in V_{\tau}\setminus V_{\tau}^{0}}\mathcal{S}_{F_{\tau}(v)}(\mathbb{P}^{n},d_\tau(v))^{\circ}  \\
&    = & n(1-|V_{\tau}|)+n|V_{\tau}|+|F_{\tau}|-3|V_{\tau}^{0}|+d(n-1)-2(|V_{\tau}|-|V_{\tau}^{0}|)  \\
&   = & n+|F_{\tau}|-2|V_{\tau}|-|V_{\tau}^{0}|+d(n-1)  \\
& = &  d(n-1)+n+m-2-|V_{\tau}^{0}|  \\
&  =  & \dim\mathcal{S}_{m}(\mathbb{P}^{n},d)-|V_{\tau}^{0}|. 
\end{eqnarray*}

In particular for those $\tau\in\grapos$
we have $\dim\MC(\tau)=\dim\MC_{m}(\mathbb{P}^{n},d)$. Since $\MC_{m}(\mathbb{P}^{n},d)$
is purely of the expected dimension, it can be decomposed as union
of $\overline{\MC(\tau)}$ for $\tau\in\grapos$. Equation (\ref{eq:Sum_plus})
follows from linearity of the intersection product \cite[Example 1.8.1]{MR1644323}.
\end{proof}

In the next section, we will see how to use the theorem to compute all enumerative invariants of contact curves. 
\section{Applications}

Let us briefly introduce the decomposition of stable trees. Given
a stable $(m,d)$-tree $(\tau,l_{\tau},d_{\tau})$, for every edge
$\{f,f'\}=e\in E_{\tau}$ there exist two unique stable trees $(\sigma,l_{\sigma},d_{\sigma})$
and $(\sigma',l_{\sigma'},d_{\sigma'})$ with the following properties:
\begin{itemize}
\item there is a partition $F_{\tau}=F_{\sigma}\sqcup F_{\sigma'}$ such
that $f\in F_{\sigma}$ and $f'\in F_{\sigma'}$,
\item for every $x\in F_{\tau}\backslash\{f,f'\}$ such that $x\in F_{\sigma}$
(resp., $F_{\sigma'}$), $j_{\sigma}(x)$ (resp., $j_{\sigma'}(x)$)
is equal to $j_{\tau}(x)$,
\item $f$ and $f'$ are fixed points of, respectively, $j_{\sigma}$ and
$j_{\sigma'}$,
\item $V_{\sigma}=\partial_{\tau}(F_{\sigma})$ and $V_{\sigma'}=\partial_{\tau}(F_{\sigma'})$, and,
\item $d_{\sigma}$ (resp., $d_{\sigma'}$) is the restriction of $d_{\tau}$
to $V_{\sigma}$ (resp., $V_{\sigma'}$).
\end{itemize}
It is immediate to deduce that $E_{\tau}\backslash\{e\}=E_{\sigma}\sqcup E_{\sigma'}$
and $L_{\tau}\sqcup\{f,f'\}=L_{\sigma}\sqcup L_{\sigma'}$. We will
use the following notation: $E_{\tau}^{*}=E_{\tau}\backslash\{e\}$,
$L_{\sigma}^{*}=L_{\sigma}\backslash\{f\}$ and $L_{\sigma'}^{*}=L_{\sigma'}\backslash\{f'\}$.
The maps $l_{\sigma}$ and $l_{\sigma'}$ are defined in the obvious
way.

We will denote by $\tau=\sigma\times_{e}\sigma'$ the decomposition
of $\tau$ with respect to the edge $e$. Note that every vertex $v$
of $\sigma$ or $\sigma'$ has the same valence as a vertex of $\tau$,
so $\sigma$ and $\sigma'$ are always stable.
\begin{cor}
\label{cor:S_closed}Let $\tau\in\gra$ such that $\tau=\sigma\times_{e}\sigma'$
for an edge $e=\{f,f'\}$. Then 
\begin{enumerate}
\item There exists a projective variety  $\overline{\MC_{\square}(\tau)}$ and a map $\overline{\MC_{\square}(\tau)}\rightarrow\overline{\MC(\tau)}$
extending $\MC_{\square}(\tau)\rightarrow\MC(\tau)$.
\item There exist isomorphisms
$$
\begin{array}{cc}
\mathcal{S}_{\square}(\tau)\cong\mathcal{S}_{\square}(\sigma)\times_{\mathbb{P}^{n}}\mathcal{S}_{\square}(\sigma'), & \overline{\MC_{\square}(\tau)}\cong\overline{\MC_{\square}(\sigma)}\times_{\mathbb{P}^{n}}\overline{\MC_{\square}(\sigma')}.
\end{array}
$$
\end{enumerate}
\end{cor}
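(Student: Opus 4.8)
The plan is to prove the two claims of Corollary \ref{cor:S_closed} by reducing them, via compatible fibred-product diagrams, to the combinatorial decomposition $\tau = \sigma\times_e\sigma'$ of the stable tree together with the product description of $\mathcal{S}_\square(\tau)$ from Proposition \ref{prop:S(t)}. The key combinatorial inputs, already recorded in the excerpt, are the partitions $E_\tau\setminus\{e\}=E_\sigma\sqcup E_{\sigma'}$ and $L_\tau\sqcup\{f,f'\}=L_\sigma\sqcup L_{\sigma'}$, together with the partition $V_\tau=V_\sigma\sqcup V_{\sigma'}$ of the vertex set and the restriction of $d_\tau$ to each side. Since $F_\tau=F_\sigma\sqcup F_{\sigma'}$ and the valences are preserved, the product $\prod_{v\in V_\tau}\mathcal{S}_{F_\tau(v)}(\mathbb{P}^n,d_\tau(v))^\circ$ appearing in \eqref{eq:S_sq} factors as the product of the analogous products over $V_\sigma$ and over $V_{\sigma'}$.

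**Proving the isomorphism of open pieces.** First I would establish the left-hand isomorphism $\mathcal{S}_\square(\tau)\cong\mathcal{S}_\square(\sigma)\times_{\mathbb{P}^n}\mathcal{S}_\square(\sigma')$. The idea is to compare the big fibred product defining $\mathcal{S}_\square(\tau)$ with the two smaller ones. The single edge $e=\{f,f'\}$ is the unique flag-pair separating the two sides; on each side the corresponding flag ($f$ for $\sigma$, $f'$ for $\sigma'$) has become a tail in $L_\sigma^*$, respectively $L_{\sigma'}^*$. Thus the copy of $\mathbb{P}^n$ indexed by the edge $e$ in the gluing condition of $\tau$ is precisely the copy over which $\mathcal{S}_\square(\sigma)$ and $\mathcal{S}_\square(\sigma')$ are to be fibred: imposing equality of the two evaluation images at $f$ and $f'$ is exactly the fibre product over $\mathbb{P}^n$. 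The remaining gluing/evaluation conditions indexed by $E_\tau^*\sqcup L_\tau$ split cleanly between the two sides. The cleanest way to write this is to invoke the standard associativity and symmetry of fibred products (cf.\ \cite[Proposition 4.16]{GW}, already cited in the proof of Proposition \ref{prop:S(t)}) to reassemble the single square \eqref{eq:S_sq} for $\tau$ as the fibre product of the two squares for $\sigma$ and $\sigma'$, glued along the factor indexed by $e$.

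**Constructing the compactification and extending the map.** For part (1), I would define $\overline{\MC_\square(\tau)}$ by repeating the fibred-product square \eqref{eq:S_sq}, but replacing each open factor $\mathcal{S}_{F_\tau(v)}(\mathbb{P}^n,d_\tau(v))^\circ$ by the closed contact moduli $\MC_{F_\tau(v)}(\mathbb{P}^n,d_\tau(v))\subset\overline{\mathcal{M}}_{0,F_\tau(v)}(\mathbb{P}^n,d_\tau(v))$ — exactly the construction indicated for $\overline{\mathcal{M}_\square(\tau)}$ in Remark \ref{rem:closure_M}. Projectivity is then inherited from properness of $\overline{\mathcal{M}}_{0,m}(\mathbb{P}^n,d)$ and the fact that each closed factor $\MC$ is the zero locus of the bundle $\mathcal{E}$, hence closed in a projective space. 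The natural ramified map $\overline{\MC_\square(\tau)}\to\overline{\MC(\tau)}$ is obtained by restricting the map $\overline{\mathcal{M}_\square(\tau)}\to\overline{\mathcal{M}}_{0,m}(\mathbb{P}^n,d)$ of Remark \ref{rem:closure_M} to the contact locus, and it extends $\MC_\square(\tau)\to\MC(\tau)$ by construction; its image is $\overline{\MC(\tau)}$ because gluing contact maps along a node yields a contact stable map (Remark \ref{rem:contact_local}) and the open stratum is dense in its closure. The right-hand isomorphism of part (2) is then proved exactly as the left-hand one, running the same fibred-product bookkeeping with the closed factors $\MC$ in place of the open factors $\mathcal{S}^\circ$.

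**The main obstacle.** The routine reassembly of fibred products is bookkeeping; the delicate point is part (1), namely that $\overline{\MC_\square(\tau)}$ is genuinely a compactification mapping \emph{onto} $\overline{\MC(\tau)}$, and not merely onto the closure of some smaller locus. The subtlety is that taking closure does not commute with fibre product in general, so one must check that a limiting contact stable map whose dual graph degenerates still arises by gluing contact maps from the boundary factors $\MC_{F_\tau(v)}$. Here I would lean on the node-locality of the contact condition (Remark \ref{rem:contact_local}) — each component of a limit is contact in its own right — so that every point of $\overline{\MC(\tau)}$ lifts, establishing surjectivity and hence that the image is all of $\overline{\MC(\tau)}$.
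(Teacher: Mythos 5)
Your treatment of the left-hand isomorphism in Point (2) --- splitting the vertex product via $F_{\tau}=F_{\sigma}\sqcup F_{\sigma'}$, reassembling the square (\ref{eq:S_sq}) by associativity of fibred products, and identifying the factor indexed by $e$ with the diagonal $(\mathbb{P}^{n})^{e}\rightarrow(\mathbb{P}^{n})^{f}\times(\mathbb{P}^{n})^{f'}$ --- is exactly the paper's argument and is fine. The genuine gap is in your construction for Point (1): you define $\overline{\MC_{\square}(\tau)}$ by putting the full contact moduli $\MC_{F_{\tau}(v)}(\mathbb{P}^{n},d_{\tau}(v))$ as factors in the fibred-product square, whereas the paper uses the closures $\overline{\mathcal{S}_{F_{\tau}(v)}(\mathbb{P}^{n},d_{\tau}(v))^{\circ}}$ of the loci of contact maps with \emph{irreducible} domain. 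These are genuinely different objects: since the ``boundary'' of $\MC_{m}(\mathbb{P}^{n},d)$ has codimension $0$ (the motivating fact of the whole paper, cf.\ Theorem \ref{thm:codim}), the space $\MC_{F_{\tau}(v)}(\mathbb{P}^{n},d_{\tau}(v))$ contains full-dimensional irreducible components that do \emph{not} lie in the closure of $\mathcal{S}_{F_{\tau}(v)}(\mathbb{P}^{n},d_{\tau}(v))^{\circ}$. Consequently your fibred product parameterizes gluings, along $\tau$, of possibly reducible contact stable maps, and its image in $\overline{\mathcal{M}}_{0,m}(\mathbb{P}^{n},d)$ is the union of the strata $\MC(\tau'')$ over all $\tau''$ contracting onto $\tau$, which is strictly larger than $\overline{\MC(\tau)}$. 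So the map $\overline{\MC_{\square}(\tau)}\rightarrow\overline{\MC(\tau)}$ claimed in Point (1) simply does not exist for your space.

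A concrete failure: take $n=3$ and $\tau$ with two vertices of degrees $1$ and $2$ joined by $e$. Since there are no irreducible contact conics in $\mathbb{P}^{3}$ (\cite[Proposition 17.1]{LM07}), the irreducible-domain contact maps of degree $2$ are double covers of contact lines, so every point of $\overline{\MC(\tau)}$ has image cycle of the form $[\ell]+2[\ell']$. Your fibred product, however, also contains points whose degree-$2$ factor is a pair of distinct incident contact lines; the glued map has image a chain of three distinct lines and therefore lies outside $\overline{\MC(\tau)}$. Note that your ``main obstacle'' paragraph worries about the wrong direction: surjectivity onto $\overline{\MC(\tau)}$ is the easy part (the image is closed by properness and contains the dense stratum $\MC(\tau)$); what fails is the containment of the image \emph{inside} $\overline{\MC(\tau)}$, and node-locality of the contact condition cannot repair this. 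The error is not cosmetic: with your definition, the right-hand isomorphism of Point (2) and the recursion of Corollary \ref{cor:int_prod} would pick up contributions from other strata --- precisely the contamination the stratification is designed to eliminate. The fix is the paper's definition: take $\overline{\mathcal{S}_{F_{\tau}(v)}(\mathbb{P}^{n},d_{\tau}(v))^{\circ}}$ as factors; then $\MC_{\square}(\tau)$ is dense in every irreducible component of the resulting projective fibred product, so the restriction of the map $\overline{\mathcal{M}_{\square}(\tau)}\rightarrow\overline{\mathcal{M}}_{0,m}(\mathbb{P}^{n},d)$ of Remark \ref{rem:closure_M} automatically lands in $\overline{\MC(\tau)}$, and the rest of your fibred-product bookkeeping for Point (2) goes through unchanged.
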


\begin{proof}
Let us define $\overline{\MC_{\square}(\tau)}$ as the fiber product
\[
\xymatrix{\overline{\MC_{\square}(\tau)}\ar[d]\ar[r] & \prod_{v\in V_{\tau}}\overline{\mathcal{S}_{F_{\tau}(v)}(\mathbb{P}^{n},d_\tau(v))^{\circ}}\ar[d]\\
(\mathbb{P}^{n})^{E_{\tau}\sqcup L_{\tau}}\ar[r] & (\mathbb{P}^{n})^{F_{\tau}}.
}
\]
Point $(1)$ follows from Remark \ref{rem:closure_M}: The image of
the natural map $\MC_{\square}(\tau)\hookrightarrow\overline{\MC_{\square}(\tau)}$
is dense in every irreducible component of $\overline{\MC_{\square}(\tau)}$,
and it factorizes the inclusion $\MC_{\square}(\tau)\hookrightarrow\overline{\mathcal{M}_{\square}(\tau)}$.
It follows that the restriction of $\overline{\mathcal{M}_{\square}(\tau)}\rightarrow\overline{\mathcal{M}}_{0,m}(\mathbb{P}^{n},d)$
to $\overline{\MC_{\square}(\tau)}$ maps to $\overline{\MC(\tau)}$.

Let us prove Point $(2)$. The isomorphism $V_{\tau}\cong V_{\sigma}\sqcup V_{\sigma'}$
preserves the valence of each vertex $v\in V_{\tau}$. So there exists
an isomorphism
\[
X:=\prod_{v\in V_{\tau}}\mathcal{S}_{F_{\tau}(v)}(\mathbb{P}^{n},d_{\tau}(v))^{\circ}\cong\prod_{v\in V_{\sigma}}\mathcal{S}_{F_{\sigma}(v)}(\mathbb{P}^{n},d_{\sigma}(v))^{\circ}\times\prod_{v\in V_{\sigma'}}\mathcal{S}_{F_{\sigma'}(v)}(\mathbb{P}^{n},d_{\sigma'}(v))^{\circ}.
\]
Using the equality $F_{\tau}=F_{\sigma}\sqcup F_{\sigma'}$, it follows
that we have a diagram
\begin{equation}
\xymatrix{\mathcal{S}_{\square}(\tau)\ar[r]\ar[d] & \mathcal{S}_{\square}(\sigma)\times\mathcal{S}_{\square}(\sigma')\ar[r]\ar[d] & X\ar[d]\\
(\mathbb{P}^{n})^{e}\times(\mathbb{P}^{n})^{E_{\tau}^{*}\sqcup L_{\tau}}\ar[r] & (\mathbb{P}^{n})^{f}\times(\mathbb{P}^{n})^{f'}\times(\mathbb{P}^{n})^{E_{\sigma}\sqcup L_{\sigma}^{*}\sqcup E_{\sigma'}\sqcup L_{\sigma'}^{*}}\ar[r] & (\mathbb{P}^{n})^{F_{\tau}}.
}
\label{eq:S_prod}
\end{equation}

The right-hand square is a fibred product diagram, it is obtained
by (\ref{eq:S_sq}) applied to $\sigma$ and $\sigma'$. By applying
again \cite[Proposition 4.16]{GW}, we deduce that also the left-hand
square is a fibred product diagram. The immersion $(\mathbb{P}^{n})^{e}\rightarrow(\mathbb{P}^{n})^{f}\times(\mathbb{P}^{n})^{f'}$
is the diagonal, hence $\mathcal{S}_{\square}(\tau)\cong\mathcal{S}_{\square}(\sigma)\times_{\mathbb{P}^{n}}\mathcal{S}_{\square}(\sigma')$.
The same proof applies equally well in $\overline{\MC_{\square}(\tau)}\cong\overline{\MC_{\square}(\sigma)}\times_{\mathbb{P}^{n}}\overline{\MC_{\square}(\sigma')}$.
\end{proof}
The next corollary allows us to compute intersections in $\overline{\MC(\tau)}$ recursively.
\begin{cor}
\label{cor:int_prod}Let $\tau\in\gra$ such that $\tau=\sigma\times_{e}\sigma'$
for an edge $e=\{f,f'\}$. Then 
\[
\int_{\overline{\MC(\tau)}}\ev_{1}^{*}(\Gamma_{1})\cdots\ev_{m}^{*}(\Gamma_{m})
\]
is equal to

\begin{equation}
\frac{|\mathrm{Aut}(\sigma)||\mathrm{Aut}(\sigma')|}{|\mathrm{Aut}(\tau)|}\sum_{j=0}^{n}\int_{\overline{\MC(\sigma)}}\prod_{i\in L_{\sigma}}\ev_{i}^{*}(\Gamma_{i})\cdot\ev_{f}^{*}(H^{n-j})\,\int_{\overline{\MC(\sigma')}}\prod_{i\in L_{\sigma'}}\ev_{i}^{*}(\Gamma_{i})\cdot\ev_{f'}^{*}(H^{j}).\label{eq:int_prod}
\end{equation}
\end{cor}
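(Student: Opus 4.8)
The plan is to transport the computation to the fibred products $\overline{\MC_{\square}(\cdot)}$, where the splitting of Corollary~\ref{cor:S_closed} is available, and then to evaluate the resulting fibred-product integral by inserting the class of the diagonal of $\mathbb{P}^n$.

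First I would record that, by Point~$(1)$ of Corollary~\ref{cor:S_closed} together with Remark~\ref{rem:closure_M}, the natural map $p_\tau\colon\overline{\MC_{\square}(\tau)}\to\overline{\MC(\tau)}$ is finite of degree $|\mathrm{Aut}(\tau)|$, and it is compatible with the evaluation maps, i.e. $\ev_i\circ p_\tau=\ev_i$ for every leaf $i$. The projection formula then yields
\[
\int_{\overline{\MC(\tau)}}\ev_1^*(\Gamma_1)\cdots\ev_m^*(\Gamma_m)=\frac{1}{|\mathrm{Aut}(\tau)|}\int_{\overline{\MC_{\square}(\tau)}}\ev_1^*(\Gamma_1)\cdots\ev_m^*(\Gamma_m).
\]
Point~$(2)$ of Corollary~\ref{cor:S_closed} identifies $\overline{\MC_{\square}(\tau)}$ with the fibred product $\overline{\MC_{\square}(\sigma)}\times_{\mathbb{P}^n}\overline{\MC_{\square}(\sigma')}$, the two maps to $\mathbb{P}^n$ being $\ev_f$ and $\ev_{f'}$. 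Under the partition $L_\tau=L_\sigma^*\sqcup L_{\sigma'}^*$ each factor $\ev_i^*(\Gamma_i)$ with $i\in L_\sigma^*$ (resp. $i\in L_{\sigma'}^*$) is pulled back from $\overline{\MC_{\square}(\sigma)}$ (resp. $\overline{\MC_{\square}(\sigma')}$).

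The core of the argument is the fibred-product integral. Writing $\delta=\ev_f\times\ev_{f'}\colon\overline{\MC_{\square}(\sigma)}\times\overline{\MC_{\square}(\sigma')}\to\mathbb{P}^n\times\mathbb{P}^n$, the fibred product is $\delta^{-1}(\Delta)$, where $\Delta\subset\mathbb{P}^n\times\mathbb{P}^n$ is the diagonal, a regular embedding of codimension $n$. The K\"unneth decomposition of the diagonal class reads $[\Delta]=\sum_{j=0}^n\pi_1^*(H^{n-j})\cdot\pi_2^*(H^j)$, so that $\delta^*[\Delta]=\sum_{j=0}^n\ev_f^*(H^{n-j})\cdot\ev_{f'}^*(H^j)$. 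Inserting this into the integral over the product $\overline{\MC_{\square}(\sigma)}\times\overline{\MC_{\square}(\sigma')}$ and applying Fubini separates the two factors; converting each factor back to $\overline{\MC(\sigma)}$ and $\overline{\MC(\sigma')}$ by the degree-$|\mathrm{Aut}(\sigma)|$ and degree-$|\mathrm{Aut}(\sigma')|$ maps produces exactly the prefactor $\frac{|\mathrm{Aut}(\sigma)||\mathrm{Aut}(\sigma')|}{|\mathrm{Aut}(\tau)|}$ and the sum in~(\ref{eq:int_prod}).

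The step I expect to be the main obstacle is justifying that $\delta^*[\Delta]$ computes the fundamental class of the fibred product, i.e. that $\delta$ meets $\Delta$ in the expected codimension $n$, so that the refined Gysin pullback $\Delta^!$ recovers $[\overline{\MC_{\square}(\tau)}]$ and no excess-intersection correction appears. This is where the symplectic action is indispensable: by Proposition~\ref{prop:ZLT} the transitive $\Sp(n+1,\mathbb{C})$-action renders $\ev_{f'}$, and hence $\delta$, flat over $\mathbb{P}^n$, forcing $\overline{\MC_{\square}(\tau)}$ to have the expected dimension $\dim\overline{\MC_{\square}(\sigma)}+\dim\overline{\MC_{\square}(\sigma')}-n$. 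One must check that this flatness survives on the closures; this follows because the $\Sp(n+1,\mathbb{C})$-action extends to all of $\overline{\mathcal{M}}_{0,m}(\mathbb{P}^n,d)$ and preserves each stratum closure $\overline{\MC(\tau)}$ while acting transitively on the target $\mathbb{P}^n$, so the evaluation maps remain flat there as well.
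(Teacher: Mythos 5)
Your proof follows the paper's own argument essentially step for step: the projection formula along the generically degree-$|\mathrm{Aut}(\tau)|$ map $\overline{\MC_{\square}(\tau)}\rightarrow\overline{\MC(\tau)}$, the identification $\overline{\MC_{\square}(\tau)}\cong\overline{\MC_{\square}(\sigma)}\times_{\mathbb{P}^{n}}\overline{\MC_{\square}(\sigma')}$ from Corollary \ref{cor:S_closed}, insertion of the K\"unneth decomposition of the diagonal class $\sum_{j=0}^{n}H^{n-j}\boxtimes H^{j}$, separation of the two factors, and the projection formula again to return to $\overline{\MC(\sigma)}$ and $\overline{\MC(\sigma')}$. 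The only difference is that you explicitly justify the absence of excess intersection (so that the diagonal class genuinely computes the fibred-product integral) via flatness of the evaluation maps coming from the transitive $\Sp(n+1,\mathbb{C})$-action; the paper leaves this point implicit, so this is a refinement of, rather than a departure from, its proof.
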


\begin{proof}
In Corollary \ref{cor:S_closed} we proved that the map $\overline{\MC_{\square}(\tau)}\rightarrow\overline{\MC(\tau)}$
is generically of degree $|\mathrm{Aut}(\tau)|$, so by projection
formula (see \cite[1.4]{MR1644323})
\[
\int_{\overline{\MC(\tau)}}\ev_{1}^{*}(\Gamma_{1})\cdots\ev_{m}^{*}(\Gamma_{m})=\frac{1}{|\mathrm{Aut}(\tau)|}\int_{\overline{\MC_{\square}(\tau)}}\ev_{1}^{*}(\Gamma_{1})\cdots\ev_{m}^{*}(\Gamma_{m}).
\]
Let us denote again by $H$ the generator of $H^2(\mathbb P^n,\mathbb Z)$. The class of the diagonal of $\mathbb P^n\times \mathbb P^n$ is $\sum_{j=0}^n H^{n-j}\boxtimes H^j $, 
so using again the corollary:

\begin{eqnarray*}
& &  \int_{\overline{\MC_{\square}(\sigma)}\times_{\mathbb{P}^{n}}\overline{\MC_{\square}(\sigma')}}\ev_{1}^{*}(\Gamma_{1})\cdots\ev_{m}^{*}(\Gamma_{m})  \\
 & = & \int_{\overline{\MC_{\square}(\sigma)}\times\overline{\MC_{\square}(\sigma')}}\ev_{1}^{*}(\Gamma_{1})\cdots\ev_{m}^{*}(\Gamma_{m})\cdot\sum_{j=0}^{n}\ev_{f}^{*}(H^{n-j})\cdot\ev_{f'}^{*}(H^{j}) \\
 & = &  \sum_{j=0}^{n}\int_{\overline{\MC_{\square}(\sigma)}}\prod_{i\in L_{\sigma}}\ev_{i}^{*}(\Gamma_{i})\cdot\ev_{f}^{*}(H^{n-j}) \int_{\overline{\MC_{\square}(\sigma')}}\prod_{i\in L_{\sigma'}}\ev_{i}^{*}(\Gamma_{i})\cdot\ev_{f'}^{*}(H^{j}) \\
 & = &  \frac{|\mathrm{Aut}(\sigma)|}{|\mathrm{Aut}(\sigma')|^{-1}}
  \sum_{j=0}^{n}\int_{\overline{\MC(\sigma)}}\prod_{i\in L_{\sigma}}\ev_{i}^{*}(\Gamma_{i})\cdot\ev_{f}^{*}(H^{n-j}) \int_{\overline{\MC(\sigma')}}\prod_{i\in L_{\sigma'}}\ev_{i}^{*}(\Gamma_{i})\cdot\ev_{f'}^{*}(H^{j}).
\end{eqnarray*}


The last equation implies (\ref{eq:int_prod}).
\end{proof}
\begin{cor}
\label{cor:shape}Let $\tau\in\gra$ and let $\Gamma_{1},...,\Gamma_{m}$
be subvarieties of $\mathbb{P}^{n}$ in general position such that
\begin{equation}
\sum_{i=1}^{m}\mathrm{codim}(\Gamma_{i})=\dim\MC(\tau).\label{eq:dim_cond_tau}
\end{equation}
Then the number of contact stable maps in $\mathbb{P}^{n}$ of degree
$d$ whose images meet all $\Gamma_{1},...,\Gamma_{m}$, and with
dual graph $\tau$, is

\begin{equation}
\int_{\overline{\MC(\tau)}}\ev_{1}^{*}(\Gamma_{1})\cdots\ev_{m}^{*}(\Gamma_{m}).\label{eq:int_S(t)}
\end{equation}
In particular, if $\sum_{i=1}^{m}\mathrm{codim}(\Gamma_{i})=d(n-1)+n+m-2$,
the number of irreducible contact curves in $\mathbb{P}^{n}$ of degree
$d$ meeting all $\Gamma_{1},...,\Gamma_{m}$ is 
\begin{equation}
\int_{\MC_{m}(\mathbb{P}^{n},d)}\ev_{1}^{*}(\Gamma_{1})\cdots\ev_{m}^{*}(\Gamma_{m})-\sum_{\tau\in\grapos\backslash\{\tau_{v}\}}\int_{\overline{\MC(\tau)}}\ev_{1}^{*}(\Gamma_{1})\cdots\ev_{m}^{*}(\Gamma_{m}).\label{eq:sub}
\end{equation}
\end{cor}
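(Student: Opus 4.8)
The plan is to read the first assertion as an enumerativity statement for the single stratum $\overline{\MC(\tau)}$, exactly parallel to the enumerativity of the global invariant (\ref{eq:the_GW_inv}) established in \cite{Mur}, and then to obtain (\ref{eq:sub}) purely formally by isolating the $\tau_{v}$-term in the decomposition (\ref{eq:Sum_plus}) of Theorem \ref{thm:codim}.

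First I would form the combined evaluation $\ev:=(\ev_{1},\ldots,\ev_{m})\colon\overline{\MC(\tau)}\rightarrow(\mathbb{P}^{n})^{m}$ and rewrite (\ref{eq:int_S(t)}) as $\int_{\overline{\MC(\tau)}}\ev^{*}[\Gamma_{1}\times\cdots\times\Gamma_{m}]$. By Proposition \ref{prop:S(t)} the closure $\overline{\MC(\tau)}$ is projective and purely of dimension $\dim\MC(\tau)$, which by (\ref{eq:dim_cond_tau}) equals $\sum_{i}\mathrm{codim}(\Gamma_{i})$; thus the expected dimension of the intersection is $0$. To turn the integral into an honest point count I would invoke Kleiman's transversality theorem for the group $G:=\Sp(n+1,\mathbb{C})^{m}$ acting on $Y:=(\mathbb{P}^{n})^{m}$, one symplectic factor on each copy of $\mathbb{P}^{n}$. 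This action is transitive on $Y$ because the standard $\Sp(n+1,\mathbb{C})$-action on each factor $\mathbb{P}^{n}$ is transitive (Proposition \ref{prop:action_Sp}). For a general $g\in G$ the fibre $\ev^{-1}\bigl(g\cdot(\Gamma_{1}\times\cdots\times\Gamma_{m})\bigr)$ is then finite, reduced, of the expected dimension $0$, and lies in the smooth locus, so its cardinality equals (\ref{eq:int_S(t)}). Here ``general position'' is interpreted precisely as replacing the $\Gamma_{i}$ by such general symplectic translates; this is legitimate because the integral (\ref{eq:int_S(t)}) depends only on the classes $[\Gamma_{i}]$, which are invariant under the connected group $\Sp(n+1,\mathbb{C})$.

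It remains to check that the solution points genuinely have dual graph $\tau$, i.e., that they lie in the dense open stratum $\MC(\tau)$ rather than in the boundary $\overline{\MC(\tau)}\setminus\MC(\tau)$. Since $\MC(\tau)$ is dense in the pure-dimensional $\overline{\MC(\tau)}$, this boundary is a closed subset of dimension strictly less than $\dim\MC(\tau)$. Stratifying it by the graphs $\tau'$ that occur, each $\MC(\tau')$ being pure-dimensional by Proposition \ref{prop:S(t)}, and applying the same Kleiman argument to every such $\tau'$---now with total codimension $\sum_{i}\mathrm{codim}(\Gamma_{i})=\dim\MC(\tau)>\dim\MC(\tau')$---forces the intersection to be empty there for general $g$. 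Hence every solution lies in $\MC(\tau)$ and corresponds to a contact stable map with dual graph exactly $\tau$, which proves (\ref{eq:int_S(t)}).

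For the final assertion, when $\sum_{i}\mathrm{codim}(\Gamma_{i})=d(n-1)+n+m-2=\dim\MC_{m}(\mathbb{P}^{n},d)$, Theorem \ref{thm:codim}(3) gives $\dim\MC(\tau)=\dim\MC_{m}(\mathbb{P}^{n},d)$ for every $\tau\in\grapos$ (those have $V_{\tau}^{0}=\emptyset$), so the hypothesis (\ref{eq:dim_cond_tau}) holds simultaneously for all of them. Since the one-vertex tree $\tau_{v}$ parameterizes irreducible maps, the first part applied to $\tau_{v}$ shows that its contribution counts irreducible contact curves; isolating this term in (\ref{eq:Sum_plus}) then yields (\ref{eq:sub}). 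I expect the only real difficulty to be the transversality bookkeeping of the third paragraph: ensuring simultaneously that the finitely many solutions avoid all positive-codimension boundary strata and the singular locus of $\overline{\MC(\tau)}$, so that each contributes with multiplicity one. This is exactly where the transitivity furnished by Proposition \ref{prop:action_Sp} is essential, and where one must be careful that it is the \emph{product} group $\Sp(n+1,\mathbb{C})^{m}$, moving the $\Gamma_{i}$ independently, rather than the diagonal subgroup preserving $\overline{\MC(\tau)}$, that supplies a transitive action on $(\mathbb{P}^{n})^{m}$.
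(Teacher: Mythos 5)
Your overall route is the paper's own: run Kleiman--Bertini on general translates of the $\Gamma_{i}$ against the evaluation map of $\overline{\MC(\tau)}$, use dimension counts to kill the bad loci, and then obtain (\ref{eq:sub}) by isolating the $\tau_{v}$-term in (\ref{eq:Sum_plus}). However, your boundary-avoidance step rests on a false inequality. You stratify $\overline{\MC(\tau)}\setminus\MC(\tau)$ by graphs $\tau'$ and apply Kleiman to each $\MC(\tau')$ invoking ``$\sum_{i}\mathrm{codim}(\Gamma_{i})=\dim\MC(\tau)>\dim\MC(\tau')$''. By Theorem \ref{thm:codim}(3), the codimension of $\MC(\tau')$ in $\MC_{m}(\mathbb{P}^{n},d)$ is $|V_{\tau'}^{0}|$, the number of degree-zero vertices, \emph{not} the number of edges: a degeneration $\tau'$ of $\tau$ that splits a positive-degree vertex into two positive-degree vertices has $|V_{\tau'}^{0}|=|V_{\tau}^{0}|$, hence $\dim\MC(\tau')=\dim\MC(\tau)$. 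For instance, when $\tau=\tau_{v}$, every $\tau'\in\grapos$ with at least two vertices gives a stratum of \emph{full} dimension. This is precisely the phenomenon the paper is organized around (the reducible locus of $\MC_{m}(\mathbb{P}^{n},d)$ is not of positive codimension), so the intuition carried over from $\overline{\mathcal{M}}_{0,m}(\mathbb{P}^{n},d)$, where codimension equals $|E_{\tau'}|$, does not transfer. The inequality you actually need --- and which your own preceding sentence already supplies --- concerns $\MC(\tau')\cap\overline{\MC(\tau)}$ rather than $\MC(\tau')$: since $\MC(\tau)$ is open and dense in the pure-dimensional $\overline{\MC(\tau)}$, the whole boundary is a closed subset of dimension strictly less than $\dim\MC(\tau)=\sum_{i}\mathrm{codim}(\Gamma_{i})$, and Kleiman's dimension statement applied directly to (the irreducible components of) that closed subset shows a general translate misses it. With that substitution the step goes through. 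Note also that reducedness and multiplicity one at the surviving points, which you defer as ``bookkeeping'', is where the paper does concrete work: one must avoid both the singular locus of $\overline{\MC(\tau)}$ and the locus of maps with nontrivial automorphisms (a coarse-moduli issue you never mention), which the paper handles by the same small-dimension mechanism together with the lemmas of \cite{Tho98} and \cite{FP}.

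A secondary point is the choice of group. The paper translates by general elements of $G^{\times m}$ with $G$ the full group of invertible $(n+1)\times(n+1)$ matrices, not by $\Sp(n+1,\mathbb{C})^{m}$. Your symplectic variant does satisfy Kleiman's hypotheses, since $\Sp(n+1,\mathbb{C})$ acts transitively on $\mathbb{P}^{n}$, but your closing claim that the transitivity ``furnished by Proposition \ref{prop:action_Sp}'' is essential here is a mischaracterization: Kleiman moves only the constraints $\Gamma_{i}$, never $\overline{\MC(\tau)}$, so nothing in this step needs to be compatible with the contact structure, and taking the full linear group yields the standard meaning of ``general position'' (general projective translates) rather than the more restrictive ``general symplectic translates''. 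Proposition \ref{prop:action_Sp} enters the paper elsewhere --- it gives the local triviality of the evaluation maps in Propositions \ref{prop:ZLT} and \ref{prop:S(t)}, which is what underwrites the pure-dimensionality used in all of the dimension counts above --- but it plays no role in the transversality argument of this corollary.
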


\begin{proof}
This proof follows the same line of \cite[Theorem 3.8]{Mur}, let
us give a sketch. Let $G$ be the group of invertible complex matrices
of order $n+1$, and let $X=\mathbb{P}^{n}$. Consider the diagram
\[
\xymatrix{ & \overline{\MC(\tau)}\ar[d]^{(\ev_{1}\times\cdots\times\ev_{m})}\\
\Gamma_{1}\times\cdots\times\Gamma_{m}=:\Gamma\ar[r] & X^{\times m}.
}
\]
By applying the Kleiman-Bertini Theorem \cite{Kle}, we
deduce that for a general $\sigma\in G^{\times m}$, $\Gamma^{\sigma}\times_{X^{\times m}}\overline{\MC(\tau)}$
is either empty, or of dimension $0$. We assume, without loss of
generality, that $\Gamma^{\sigma}$ is smooth by applying again Kleiman-Bertini.
In the same way we may prove\footnote{Details can be found in \cite[Lemma p. 5]{Tho98} or \cite[Lemma 14]{FP}.}
that the intersection 
\[
\overline{\MC(\tau)}\cap\ev_{1}^{-1}(g_{1}\Gamma_{1})\cap\cdots\cap\ev_{m}^{-1}(g_{m}\Gamma_{m})
\]
is supported in the intersection between the automorphisms-free part
of $\overline{\mathcal{M}}_{0,m}(X,d)$ and the smooth part of $\overline{\MC(\tau)}$.
So, when $\Gamma^{\sigma}\times_{X^{\times m}}\overline{\MC(\tau)}$
is not empty, it is reduced of dimension $0$. Points of that intersection
represent maps whose images meet all $\Gamma_{1},...,\Gamma_{m}$,
and it is equal to (\ref{eq:int_S(t)}).

Finally, $\dim\MC(\tau)=d(n-1)+n+m-2$ implies that $\tau$ has no
contracted components by Theorem \ref{thm:codim}. So if we are interested
in a unique non-contracted component, it must be $\tau=\tau_{v}$.
Equation (\ref{eq:sub}) follows from (\ref{eq:Sum_plus}).
\end{proof}

\begin{rem}\label{rem:all_comp}
The last results imply that for every $\tau\in\gra$, we can compute every integral like in (\ref{eq:int_S(t)}). Indeed, if $d=1$ then $\overline{\MC(\tau)}=\MC_m(\mathbb P^n,1)$
and we can use the package \verb!AtiyahBott! of \cite{MurSch}. If $d>1$, then Eqs. (\ref{eq:int_prod}) and (\ref{eq:sub}) allow us to reduce the computation to intersections in $\MC_m(\mathbb P^n,d)$ and $\MC(\tau')$, for some  $\tau'\in\Gamma(m',d')$ where $m'\in\mathbb{Z}$ and $d'<d$. The result follows by using again the package and recursion on $d$.
\end{rem}
Let us see an example of these applications.
\begin{example}
\label{exa:conic}Suppose we want to compute the number of \textit{irreducible}
contact curves of degree $2$ in $\mathbb{P}^{3}$ passing through
two points and a line in general position. Let $H$ be the class of
a hyperplane in $\mathbb{P}^{3}$. The number of contact curves is
given by
\[
\int_{\MC_{3}(\mathbb{P}^{3},2)}\ev_{1}^{*}(H^{2})\cdot\ev_{2}^{*}(H^{3})\cdot\ev_{3}^{*}(H^{3})=2.
\]
The set $\Gamma(3,2)^{+}$ consists of the following stable trees:

{ \centering \begin {tikzpicture}[auto ,node distance =1.3 cm,on grid , thick , state/.style ={ text=black }]  \node[state] (A1){$2$}; \node[] at (0,0.5) {$\tau_1$};  \node[] at (0,-0.5) {$\{1,2,3\}$};  
\node[state] (A2)[right =of A1]{$1$}; \node[] at (1.3,-0.5) {$\{1,2\}$}; \node[] at (2.6,-0.5) {$\{3\}$};; \node[] at (2,0.5) {$\tau_2$}; \node[state] (B2) [right =of A2] {$1$}; \path (A2) edge node{} (B2);
\node[state] (A3) [right =of B2] {$1$}; \node[] at (3.9,-0.5) {$\{1,3\}$}; \node[] at (5.2,-0.5) {$\{2\}$}; \node[] at (4.6,0.5) {$\tau_3$}; \node[state] (B3) [right =of A3] {$1$};  \path (A3) edge node{} (B3);
\node[state] (A4)[right =of B3]{$1$}; \node[] at (6.5,-0.5) {$\{2,3\}$}; \node[] at (7.8,-0.5) {$\{1\}$}; \node[] at (7.2,0.5) {$\tau_4$}; \node[state] (B4) [right =of A4] {$1$};  \path (A4) edge node{} (B4);
\node[state] (A5) [right =of B4] {$1$}; \node[] at (9.1,-0.5) {$\{1,2,3\}$}; \node[] at (10.4,-0.5) {}; \node[] at (9.8,0.5) {$\tau_5$}; \node[state] (B5) [right =of A5] {$1$};  \path (A5) edge node{} (B5);
\end{tikzpicture} \par}
The vertices are labeled by their degree, under any vertex $v$ we
put its leaves. We want to compute $\int_{\overline{\MC(\tau_{1})}}\ev_{1}^{*}(H^{2})\cdot\ev_{2}^{*}(H^{3})\cdot\ev_{3}^{*}(H^{3})$.
Note that $\tau_{2}=\sigma\times_{e}\sigma'$ where

{ \centering \begin {tikzpicture}[auto ,node distance =3 cm,on grid , thick , state/.style ={ text=black }]  \node[state] (A1){$1$}; \node[] at (0,0.5) {$\sigma$};  \node[] at (0,-0.5) {$\{1,2,f\}$};  \node[state] (A2)[right =of A1]{$1$}; \node[] at (3,-0.5) {$\{3,f'\}$}; \node[] at (3,0.5) {$\sigma'$}; \end{tikzpicture} \par}

Using Corollary \ref{cor:int_prod} and Remark \ref{rem:all_comp}, we get
\begin{eqnarray*}
& & \int_{\overline{\MC(\tau_{2})}}\ev_{1}^{*}(H^{2})\cdot\ev_{2}^{*}(H^{3})\cdot\ev_{3}^{*}(H^{3}) \\
& = & \sum_{j=0}^{3}\int_{\overline{\MC(\sigma)}}\ev_{1}^{*}(H^{2})\cdot\ev_{2}^{*}(H^{3})\cdot\ev_{f}^{*}(H^{3-j})\int_{\overline{\MC(\sigma')}}\ev_{3}^{*}(H^{3})\cdot\ev_{f'}^{*}(H^{j})\\
& = & \int_{\overline{\MC(\sigma)}}\ev_{1}^{*}(H^{2})\cdot\ev_{2}^{*}(H^{3})\cdot\ev_{f}^{*}(H^{1})
\int_{\overline{\MC(\sigma')}}\ev_{3}^{*}(H^{3})\cdot\ev_{f'}^{*}(H^{2})\\
& = & 1\cdot1=1.
\end{eqnarray*}

The computation of $\int_{\overline{\MC(\tau_{3})}}\ev_{1}^{*}(H^{2})\cdot\ev_{2}^{*}(H^{3})\cdot\ev_{3}^{*}(H^{3})=1$
is similar. Finally, one can easily figure out that $\int_{\overline{\MC(\tau)}}\ev_{1}^{*}(H^{2})\cdot\ev_{2}^{*}(H^{3})\cdot\ev_{3}^{*}(H^{3})=0$
for $\tau\in\{\tau_{4},\tau_{5}\}$. It follows that
\[
\int_{\overline{\MC(\tau_{1})}}\ev_{1}^{*}(H^{3})\cdot\ev_{2}^{*}(H^{3})\cdot\ev_{3}^{*}(H^{2})=2-1-1-0-0=0.
\]
So, there are no irreducible contact conics through two points and
a line in general position. This result is completely expected since
there are no irreducible contact conics in $\mathbb{P}^{3}$ (\cite[Proposition 17.1]{LM07}). Anyway,
this shows that our method works.
\end{example}

\begin{rem}
\label{rem:no_Aut}If the value of (\ref{cor:int_prod}) is non-zero,
then $|\mathrm{Aut}(\tau)|=1$. Indeed, if  $\sigma'=\tau_{v}$
where $v$ is an extremal vertex with no leaves, then Eq. (\ref{cor:int_prod})
becomes 
\[
\frac{|\mathrm{Aut}(\sigma)|}{|\mathrm{Aut}(\tau)|}\sum_{j=0}^{n}\int_{\overline{\MC(\sigma)}}\prod_{i\in L_{\sigma}}\ev_{i}^{*}(\Gamma_{i})\cdot\ev_{f}^{*}(H^{n-j})\,\int_{\overline{\MC(\tau_{v})}}\ev_{f'}^{*}(H^{j}).
\]
The value of $\int_{\overline{\MC(\tau_{v})}}\ev_{f'}^{*}(H^{j})$
is zero for dimensional reasons, meaning that also (\ref{cor:int_prod})
is zero. This implies that every vertex of $\tau$ with only one edge
must have leaves. Since any automorphism preserves the leaves, all
vertices with only one edge must be fixed by any automorphism, hence
the identity is the only possible automorphism. Moreover, the sum
(\ref{cor:int_prod}) is non zero for at most one value of $j$, which
is $j_{0}=\dim\MC(\tau_{v})-\sum_{i\in L_{\tau_{v}}}\mathrm{codim}(\Gamma_{i})$,
if $0\le j_{0}\le n$.
\end{rem}
The following result is \cite[Proposition 2.3(i)]{levcovitz2011symplectic}. We provide another proof. 
\begin{prop}
\label{prop:LV}The number of contact plane curves of degree $d$
meeting $d+3$ general lines in $\mathbb{P}^{3}$ is 
\[
\frac{d^{2}}{6}(d+3)(d+2)(d+1)(d-1)=20d\binom{d+3}{5}.
\]
\end{prop}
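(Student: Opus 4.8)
The plan is to bypass the stable-map compactification entirely and build a smooth complete parameter space for plane contact curves on which the count reduces to a single monomial in a relative hyperplane class. First I would pin down the geometry of a contact curve contained in a plane $\Pi\subset\mathbb{P}^{3}$. The symplectic linear algebra behind $\alpha$ shows that $\alpha$ restricted to the cone over $\Pi$ has a one-dimensional radical, yielding a distinguished center $p_{\Pi}\in\Pi$; dually $\Pi=H_{p_{\Pi}}$ for the polarity $p\mapsto H_{p}$ of the Example. The line field cut on $\Pi$ by the contact structure is then the pencil of lines through $p_{\Pi}$, so every contact curve in $\Pi$ is a union of contact lines through $p_{\Pi}$. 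Conversely the contact lines through a point $p_{0}$ are exactly the projectivized fibre $\mathbb{P}(D_{p_{0}})$ of the rank-two contact distribution $D\subset T\mathbb{P}^{3}$. Hence, for $d\ge 2$, a plane contact curve of degree $d$ is the datum of a pair $(p_{0},Z)$ with $p_{0}\in\mathbb{P}^{3}$ and $Z$ an effective degree-$d$ divisor on $\mathbb{P}(D_{p_{0}})$, the center being recovered as the common point of the components; this is faithful. I therefore take as parameter space the projective bundle $\pi\colon B:=\mathbb{P}(\mathrm{Sym}^{d}D^{\vee})\to\mathbb{P}^{3}$, of dimension $d+3$.

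Next I would set up the count on $B$. Write $\xi=c_{1}(\mathcal{O}_{B}(1))$ and, for a line $\ell$, let $D_{\ell}\subset B$ be the locus of curves meeting $\ell$. By the $\Sp(4,\mathbb{C})$-equivariance of the whole picture (Proposition \ref{prop:action_Sp}) together with Kleiman--Bertini, exactly as in the proof of Corollary \ref{cor:shape}, the $d+3$ translated conditions meet transversally in reduced points representing honest plane contact curves, so the sought number is $\int_{B}[D_{\ell}]^{d+3}$. The crucial step is to show $[D_{\ell}]=\xi$, with no $\pi^{*}H$ correction. For this I would use the Chow (Cayley) form: $C_{(p_{0},Z)}$ meets $\ell$ precisely when its Chow form vanishes on $\ell$, and since $C_{(p_{0},Z)}$ is the union of the $d$ contact lines cut out by $Z$, its Chow form is the product of the $d$ linear Chow forms of those lines. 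This product depends linearly on $Z\in\mathrm{Sym}^{d}D_{p_{0}}^{\vee}$, so the classifying map $B\to\mathbb{P}\,H^{0}(\mathbb{G}(1,3),\mathcal{O}(d))$ is given by a subsystem of $\mathcal{O}_{B}(1)$ and the hyperplane ``vanish on $\ell$'' pulls back to $\xi$.

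Finally I would evaluate $\int_{B}\xi^{d+3}$ by the Segre-class formalism: it is the degree of $s_{3}(\mathrm{Sym}^{d}D^{\vee})$ on $\mathbb{P}^{3}$. From the contact sequence $0\to D\to T\mathbb{P}^{3}\to\mathcal{O}(2)\to 0$ one gets $c(D)=(1+H)^{4}/(1+2H)=1+2H+2H^{2}$, so $D$ has Chern roots $\alpha,\beta$ with $\alpha+\beta=2H$, $\alpha\beta=2H^{2}$ and $\alpha^{2}+\beta^{2}=0$. Thus $\mathrm{Sym}^{d}D$ has Chern roots $\mu_{j}H=(d+\iota(2j-d))H$ for $j=0,\dots,d$ with $\iota^{2}=-1$, and computing the complete homogeneous symmetric function $h_{3}(\mu)$ by Newton's identities from $\sum\mu_{j}=d(d+1)$, $\sum\mu_{j}^{2}=\tfrac{2}{3}d(d+1)(d-1)$ and $\sum\mu_{j}^{3}=-2d^{2}(d+1)$ yields $s_{3}(\mathrm{Sym}^{d}D^{\vee})=\tfrac{d^{2}}{6}(d+3)(d+2)(d+1)(d-1)$, the asserted number.

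I expect the delicate point to be the identity $[D_{\ell}]=\xi$, i.e.\ that the incidence divisor carries no $\pi^{*}H$-twist; everything else is bookkeeping. Equivalently one must check that the sub-line-bundle of $D$ selecting, over each $p_{0}$, the unique contact line through $p_{0}$ meeting $\ell$ has degree zero, and the cleanest route is the linearity of the Chow form above (or, failing that, a direct computation in the explicit coordinates of the Example). A secondary care point is the duality/sign convention in the Segre computation, which is fixed by positivity of the count and by the sanity checks that $d=1$ gives $0$ and $d=2$ gives $40$.
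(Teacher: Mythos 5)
Your proposal is correct, but it takes a genuinely different route from the paper. The paper proves this proposition as a showcase of its graph-stratification machinery: after reducing to cones (citing the same classification of plane contact curves you derive from the symplectic linear algebra), it enumerates the admissible dual graphs of stable maps whose image is such a cone, finds that only two configurations $\tau_{1},\tau_{2}$ of leaf distributions matter, computes their individual contributions ($4$ and $8$) via Corollary \ref{cor:int_prod}, and sums against the combinatorial count of leaf distributions, $\binom{d+3}{3}\binom{d}{2}\cdot 4+\frac{1}{3!}\binom{d+3}{2}\binom{d+1}{2}\binom{d-1}{2}\cdot 8$. You instead bypass stable maps entirely: the parameter space $B=\mathbb{P}(\mathrm{Sym}^{d}D^{\vee})\to\mathbb{P}^{3}$ of pairs (center, degree-$d$ divisor of contact directions) is faithful for $d\ge2$ off a $4$-dimensional locus, the incidence divisor satisfies $[D_{\ell}]=\xi$, and the count is $\deg s_{3}(\mathrm{Sym}^{d}D^{\vee})$. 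I checked your computation: $c(D)=1+2H+2H^{2}$ is correct, the Chern roots of $\mathrm{Sym}^{d}D$ are $(d+\iota(2j-d))H$, the power sums $p_{1}=d(d+1)$, $p_{2}=\tfrac{2}{3}d(d+1)(d-1)$, $p_{3}=-2d^{2}(d+1)$ are right, and $h_{3}=\tfrac{1}{6}(p_{1}^{3}+3p_{1}p_{2}+2p_{3})=\tfrac{d^{2}}{6}(d+3)(d+2)(d+1)(d-1)$ as claimed; your Chow-form argument for $[D_{\ell}]=\xi$ also holds, since $(u_{1},\ldots,u_{d})\mapsto\prod_{i}\langle v_{0}\wedge u_{i},\,\cdot\,\rangle$ is symmetric multilinear, kills $v_{0}$, and is degree $d$ in $v_{0}$, hence gives a section of $E^{\vee}$ with $E=\mathrm{Sym}^{d}(D(-2))\cong\mathrm{Sym}^{d}D^{\vee}$ (using $\wedge^{2}D\cong\mathcal{O}(2)$), so the dual-of-tautological-subbundle convention you need is exactly the one in your Segre computation. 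What each approach buys: the paper's argument is an instance of a general machine (arbitrary Schubert conditions, all degrees, all $\mathbb{P}^{2n+1}$) and exercises the recursion of Corollary \ref{cor:int_prod}, at the price of enumerating graph types and tracking distinct stable maps with the same image; yours is self-contained, avoids all dual-graph bookkeeping, and produces the closed formula in one Segre-class evaluation, but it leans on geometry special to plane contact curves and is in spirit much closer to Levcovitz--Vainsencher's original proof than to this paper's.

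One point you should tighten: the transversality step cannot be delegated to $\Sp(4,\mathbb{C})$-equivariance alone, because $\Sp(4,\mathbb{C})$ acts transitively neither on $B$ nor on the Grassmannian of lines (contact lines form a separate orbit), so Kleiman--Bertini does not apply directly in the form you invoke. The fix is the one implicitly used in the paper's proof of Corollary \ref{cor:shape}: translate the $d+3$ lines by \emph{independent general elements of} $PGL(4,\mathbb{C})$, and apply Kleiman to the $(d+3)$-fold fiber power over $B$ of the universal curve $\mathcal{C}\subset B\times\mathbb{P}^{3}$, mapped to $(\mathbb{P}^{3})^{d+3}$, where the product group does act transitively. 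The same device, restricted over the bad loci (divisors $Z$ supported at a single point, or non-reduced $Z$), gives the dimension count excluding them from the intersection, so the intersection points are reduced and correspond bijectively to honest cones. This is a fillable gap at the same level of rigor as the paper's own sketch.
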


\begin{proof}
We give a sketch of the proof, showing that it can be proved using
graphs. A contact plane curve is linearly degenerate in $\mathbb{P}^{3}$,
then it is cone \cite[Proposition 17.1]{LM07}. On the other hand,
cones in $\mathbb{P}^{3}$ are contact if they are linearly degenerate\footnote{Otherwise, the symplectic form defining the contact structure in $\mathbb{C}^{4}$
would be degenerate.}. The cases $d=1,2$ are trivial, so let us focus on the case $d\ge3$.
We are looking for curves given by the union of $d$ contact lines,
all of them passing through a fixed point. Let $X$ be one of those
contact curves. Any stable map $(C,f,p_{1},\ldots,p_{m})$ with dual
graph $\tau$ whose image is $X$ must have $m=d+3$ marked points
(since we are imposing $d+3$ Schubert conditions). By condition (\ref{eq:dim_cond_tau}),
the number of contracted vertices of degree $0$ of $\tau$ is $d-2$.
The inverse image under $f$ of the singular point of $X$ must be
connected. Hence, the degree $0$ vertices form a connected subtree.
Since we are not imposing any condition on the singular point, no
leaf is attached to a degree 0 vertex. So all leaves lie on $d$
non-contracted components of degree $1$.

The dual graph $\tau$ with these properties is not univoquely determinated.
An example is $\tau_{1}$ in Figure \ref{fig:tau1} (we use the same
notation of Example \ref{exa:conic}).

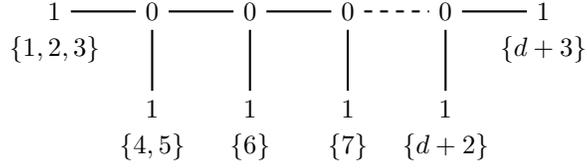
\begin{figure}[h]
{ \centering \begin {tikzpicture}[auto ,node distance =1.3 cm,on grid , thick , state/.style ={ text=black }]  \node[state] (A1){$1$}; \node[] at (0,-0.5) {$\{1,2,3\}$}; 
\node[state] (A2)[right =of A1]{$0$}; \path (A1) edge node{} (A2); \node[] at (1.3,-1.8) {$\{4,5\}$}; \node[state] (C2)[below =of A2]{$1$}; \path (A2) edge node{} (C2); \node[state] (A3) [right =of A2] {$0$}; \path (A2) edge node{} (A3);
\node[state] (C2) [below =of A3] {$1$}; \path (A3) edge node{} (C2); \node[] at (2.6,-1.8) {$\{6\}$};
\node[state] (A4) [right =of A3] {$0$}; \path (A4) edge node{} (A3);
\node[state] (T) [below =of A4] {$1$}; \path (A4) edge node{} (T); \node[] at (3.9,-1.8) {$\{7\}$};
\node[state] (A5) [right =of A4] {$0$}; \path (A4) [dashed] edge node{} (A5); 
\node[state] (C4) [below =of A5] {$1$}; \path (A5) edge node{} (C4); \node[] at (5.2,-1.8) {$\{d+2\}$};
\node[state] (A6) [right =of A5] {$1$}; \path (A6) edge node{} (A5); \node[] at (6.5,-0.5) {$\{d+3\}$}; 
\end{tikzpicture} \par}

\caption{\label{fig:tau1}Stable tree $\tau_{1}$.}

\end{figure}
Equation (\ref{cor:int_prod})
applied to any of these stable trees is equal to $4$ (e.g., by induction
on $d$), so there are exactly $4$ maps having $\tau_{1}$ as dual
graph (Corollary \ref{cor:shape}).
The image of $\tau_{1}$ only depends on
the vertices with $2$ and $3$ leaves.
There are exactly $\binom{d+3}{3}\binom{d}{2}$ different
dual graphs with the same support as $\tau_{1}$, but with different leaves in those vertices.

If we consider another stable tree $\tau_{1}'$ in Figure \ref{fig:tau1-1},
this gives different stable maps but with the same images as $\tau_{1}$. 

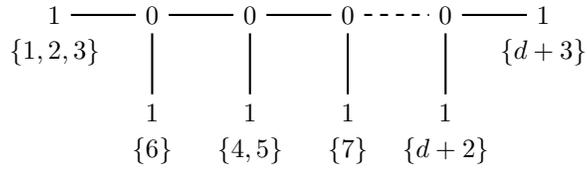
\begin{figure}[h]
{ \centering \begin {tikzpicture}[auto ,node distance =1.3 cm,on grid , thick , state/.style ={ text=black }]  \node[state] (A1){$1$}; \node[] at (0,-0.5) {$\{1,2,3\}$}; 
\node[state] (A2)[right =of A1]{$0$}; \path (A1) edge node{} (A2); \node[] at (1.3,-1.8) {$\{6\}$}; \node[state] (C2)[below =of A2]{$1$}; \path (A2) edge node{} (C2); \node[state] (A3) [right =of A2] {$0$}; \path (A2) edge node{} (A3);
\node[state] (C2) [below =of A3] {$1$}; \path (A3) edge node{} (C2); \node[] at (2.6,-1.8) {$\{4,5\}$};
\node[state] (A4) [right =of A3] {$0$}; \path (A4) edge node{} (A3);
\node[state] (T) [below =of A4] {$1$}; \path (A4) edge node{} (T); \node[] at (3.9,-1.8) {$\{7\}$};
\node[state] (A5) [right =of A4] {$0$}; \path (A4) [dashed] edge node{} (A5); 
\node[state] (C4) [below =of A5] {$1$}; \path (A5) edge node{} (C4); \node[] at (5.2,-1.8) {$\{d+2\}$};
\node[state] (A6) [right =of A5] {$1$}; \path (A6) edge node{} (A5); \node[] at (6.5,-0.5) {$\{d+3\}$}; 
\end{tikzpicture} \par}\caption{\label{fig:tau1-1}Stable tree $\tau_{1}'$.}

\end{figure}

Since we are interested in the contact stable curves rather than in
the stable maps, we can ignore all other stable trees if they give
the same image. There is another family of graphs interesting to us,
it is $\tau_{2}$ in Figure \ref{fig:tau2}.

\begin{figure}[h]
{ \centering \begin {tikzpicture}[auto ,node distance =1.3 cm,on grid , thick , state/.style ={ text=black }]  \node[state] (A1){$1$}; \node[] at (0,-0.5) {$\{1,2\}$}; 
\node[state] (A2)[right =of A1]{$0$}; \path (A1) edge node{} (A2); \node[] at (1.3,-1.8) {$\{3,4\}$}; \node[state] (C2)[below =of A2]{$1$}; \path (A2) edge node{} (C2); \node[state] (A3) [right =of A2] {$0$}; \path (A2) edge node{} (A3);
\node[state] (C2) [below =of A3] {$1$}; \path (A3) edge node{} (C2); \node[] at (2.6,-1.8) {$\{5,6\}$};
\node[state] (A4) [right =of A3] {$0$}; \path (A4) edge node{} (A3);
\node[state] (T) [below =of A4] {$1$}; \path (A4) edge node{} (T); \node[] at (3.9,-1.8) {$\{7\}$};
\node[state] (A5) [right =of A4] {$0$}; \path (A4) [dashed] edge node{} (A5); 
\node[state] (C4) [below =of A5] {$1$}; \path (A5) edge node{} (C4); \node[] at (5.2,-1.8) {$\{d+2\}$};
\node[state] (A6) [right =of A5] {$1$}; \path (A6) edge node{} (A5); \node[] at (6.5,-0.5) {$\{d+3\}$}; 
\end{tikzpicture} \par}

\caption{\label{fig:tau2}Stable tree $\tau_{2}$.}
\end{figure}
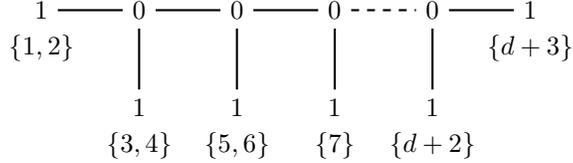

Arguing like before, we get $\frac{1}{3!}\binom{d+3}{2}\binom{d+1}{2}\binom{d-1}{2}$
different dual graphs with different images, each one with a contribution
equal to $8$. It is easy to see that there are no more interesting
stable trees, for several reasons. Each non-contracted component must
have at least one leaf by Remark \ref{rem:no_Aut}, and no more than
three leaves (there are no contact lines through $4$ general lines).
Each
degree zero vertex $v$ must have valence $n(v)=3$ by Eq. (\ref{eq:d=00003D0}).
Moreover, it is not important the stable tree itself but how the $d+3$
leaves are distributed among the $d$ vertices of degree $1$. Hence
$\tau_{1}$ and $\tau_{2}$ are the only interesting configurations.
It follows that the number $N$ of curves we are looking for is
$$
\binom{d+3}{3}\binom{d}{2}\int_{\overline{\MC(\tau_{1})}}\prod_{i=1}^{d+3}\ev_{i}^{*}(H^{2})+\frac{1}{3!}\binom{d+3}{2}\binom{d+1}{2}\binom{d-1}{2}\int_{\overline{\MC(\tau_{2})}}\prod_{i=1}^{d+3}\ev_{i}^{*}(H^{2}).
$$
Using
$$
\int_{\overline{\MC(\tau_{1})}}\prod_{i=1}^{d+3}\ev_{i}^{*}(H^{2})=4,\,\,\,\,\,\,\,\int_{\overline{\MC(\tau_{2})}}\prod_{i=1}^{d+3}\ev_{i}^{*}(H^{2})=8,
$$
we get

\begin{eqnarray*}
N & = & \binom{d+3}{3}\binom{d}{2}4+\frac{1}{3!}\binom{d+3}{2}\binom{d+1}{2}\binom{d-1}{2}8\\
 & = & 20d\binom{d+3}{5},
\end{eqnarray*}
as expected.
\end{proof}


\subsection{Number of irreducible contact curves}
We denote by $N_{d}^{irr}(a_{2},\ldots,a_{n})$ the number of irreducible contact curves of degree $d$ meeting $a_i$ general linear subspaces of codimension $i$ in $\mathbb P^n$.
As application of Corollary \ref{cor:shape}, we give tables containing such numbers for $n=3,5$ and low values of $d$.
We subtract from the number of contact curves given in \cite{Mur,MurSch}, the number of reducible ones.
Using Julia (\cite{bezanson2017julia}), we implemented Eq. (\ref{eq:int_prod}) in \cite{code}. Csaba Schneider wrote the part based on the algorithm for generation of trees of \cite{McKay}.

\begin{rem}
The number $N_{3}^{irr}(1,3)=3$ was first computed by \cite{kal}.
The number $N_{3}^{irr}(7,0)=1080$ was first computed by \cite{Eden}
using hand-done computations of reducibles contact curves of degree
$3$. He also claimed, using the same strategy, that $N_{4}^{irr}(9,0)=378944$.
Unfortunately, his computation is not complete.

\end{rem}

\begin{table}[h]
\centering%
\begin{tabular}{|c|c|c||c|c|c||c|c|c|}
\hline 
$d$ & $a=(a_{2},a_{3})$ & $N_{3}^{irr}(a)$ & $d$ & $a$ & $N_{4}^{irr}(a)$ & $d$ & $a$ & $N_{5}^{irr}(a)$\tabularnewline
\hline 
\hline 
\multirow{4}{*}{$3$} & $(7,0)$ & $1080$ & \multirow{5}{*}{$4$} & $(9,0)$ & $145664$ & \multirow{6}{*}{$5$} & $(11,0)$ & $65619360$\tabularnewline
\cline{2-3} \cline{3-3} \cline{5-6} \cline{6-6} \cline{8-9} \cline{9-9} 
 & $(5,1)$ & $132$ &  & $(7,1)$ & $12800$ &  & $(9,1)$ & $4501008$\tabularnewline
\cline{2-3} \cline{3-3} \cline{5-6} \cline{6-6} \cline{8-9} \cline{9-9} 
 & $(3,2)$ & $18$ &  & $(5,2)$ & $1216$ &  & $(7,2)$ & $328824$\tabularnewline
\cline{2-3} \cline{3-3} \cline{5-6} \cline{6-6} \cline{8-9} \cline{9-9} 
 & $(1,3)$ & $3$ &  & $(3,3)$ & $128$ &  & $(5,3)$ & $25884$\tabularnewline
\cline{1-3} \cline{2-3} \cline{3-3} \cline{5-6} \cline{6-6} \cline{8-9} \cline{9-9} 
\multicolumn{1}{c}{} & \multicolumn{1}{c}{} & \multicolumn{1}{c|}{} &  & $(1,4)$ & $16$ &  & $(3,4)$ & $2250$\tabularnewline
\cline{4-6} \cline{5-6} \cline{6-6} \cline{8-9} \cline{9-9} 
\multicolumn{1}{c}{} & \multicolumn{1}{c}{} & \multicolumn{1}{c}{} & \multicolumn{1}{c}{} & \multicolumn{1}{c}{} & \multicolumn{1}{c|}{} &  & $(1,5)$ & $225$\tabularnewline
\cline{7-9} \cline{8-9} \cline{9-9} 
\end{tabular}\caption{\label{tab:EnuNum}Irreducible contact curves in $\mathbb{P}^{3}$
of degree $d\le5$.}
\end{table}

\begin{table}[H]
\centering%
\begin{tabular}{|c|c||c|c||c|c|}
\hline 
$a=(a_{2},a_{3},a_{4},a_{5})$ & $N_{2}^{irr}(a)$ & $a$ & $N_{2}^{irr}(a)$ & $a$ & $N_{2}^{irr}(a)$\tabularnewline
\hline 
\hline 
$(11,0,0,0)$ & $27184$ & $(4,2,1,0)$ & $100$ & $(1,5,0,0)$ & $48$\tabularnewline
\hline 
$(9,1,0,0)$ & $7554$ & $(4,0,1,1)$ & $30$ & $(1,3,0,1)$ & $2$\tabularnewline
\hline 
$(8,0,1,0)$ & $1262$ & $(3,4,0,0)$ & $168$ & $(1,2,2,0)$ & $4$\tabularnewline
\hline 
$(7,2,0,0)$ & $2112$ & $(3,2,0,1)$ & $22$ & $(1,1,0,2)$ & $0$\tabularnewline
\hline 
$(7,0,0,1)$ & $432$ & $(3,1,2,0)$ & $16$ & $(1,0,2,1)$ & $0$\tabularnewline
\hline 
$(6,1,1,0)$ & $355$ & $(3,0,0,2)$ & $8$ & $(0,4,1,0)$ & $8$\tabularnewline
\hline 
$(5,3,0,0)$ & $594$ & $(2,3,1,0)$ & $28$ & $(0,2,1,1)$ & $0$\tabularnewline
\hline 
$(5,1,0,1)$ & $119$ & $(2,1,1,1)$ & $3$ & $(0,1,3,0)$ & $0$\tabularnewline
\hline 
$(5,0,2,0)$ & $58$ & $(2,0,3,0)$ & $2$ & $(0,0,1,2)$ & $0$\tabularnewline
\hline 
\end{tabular}\caption{\label{tab:EnuNuminP5-con}Irreducible contact conics in $\mathbb{P}^{5}$.}
\end{table}

\bibliographystyle{amsalpha}
\bibliography{RefModuliContactCurves}

\end{document}